\newtheorem{defn}{Definition}
\newtheorem{thm}{Theorem}
\newtheorem{prop}{Proposition}
\newtheorem{cor}{Corollary}
\newtheorem{lemma}{Lemma}
\DeclareMathOperator{\diver}{\textrm{div}}
\newcommand{\DDD}{\mathcal{D}}
\newcommand{\LL}{\mathbb{L}}
\newcommand{\Lun}{L^1}
\newcommand{\Lunloc}{L^1_{loc}}
\newcommand{\Lde}{L^2}
\newcommand{\Lp}{L^p}
\newcommand{\RR}{\mathbb{R}}
\newcommand{\Com}{C^\infty_0}
\newcommand{\CCC}{C}
\newcommand{\Linf}{L^\infty}
\makeatletter \@addtoreset{equation}{section} \makeatother
\newcommand{\cqfd}{{\unskip\kern 6pt\penalty 500
\raise -2pt\hbox{\vrule\vbox to 6pt{\hrule width 6pt
\vfill\hrule}\vrule}\par}}
\title[2D Hamiltonian transport eq with $L^p$ coefficients.]{On Two-dimensional 
Hamiltonian Transport Equations with $L^p_{loc}$ 
coefficients}
\author{Maxime Hauray}
\address{M. Hauray: Universit\'e Paris-Dauphine, et CNRS}
\email{maxime.hauray@univ-amu.fr}
\begin{document}

\subjclass[2000]{35R05, 35F99}

\keywords{ODE and transport equation, low regularity vector-field.}

\begin{abstract} We consider two-dimensional autonomous divergence free 
vector-fields in $\Lde_{loc}$. Under a condition on direction of the flow and on 
the set of critical points, we prove the existence and uniqueness of a stable 
a.e. flow and of renormalized solutions of the associated transport equation.
\end{abstract}

\maketitle

\section{Introduction}

We consider the following transport equation,
\begin{equation} \label{trans_dim}
\frac{\partial u}{\partial t}(t,x) + b(x) \cdot \nabla_x u(t,x) = 0
\end{equation}
with initial conditions
\begin{equation} \label{initial}
u(0,x)=u^o(x)
\end{equation}
where $t \in \RR$, $x \in \Omega$, $u^o: \Omega \rightarrow \RR$, $b:
\Omega \rightarrow \RR^2$ satisfies $div \, b =0$ and $u:\RR \times
\Omega \rightarrow \RR$. The domain $\Omega$ is the torus $\Pi^2$, or
$\RR^2$ but in that case we must assume that $b$ satisfies  some
natural growth conditions, or a bounded open regular subset of $\RR^2$
and $b$ is then required to be tangent to the surface $\partial
\Omega$. We assume that $u^0 \in \Lp$ for some $p \in [1,\infty]$.
 
As is well known, this transport equation is in some sense equivalent
to the ODE 
\begin{equation} \label{ODE_dim}
\dot{X}(t)= b(X(t))
\end{equation}

Let us begin with some definitions and a proposition in which we
always assume that $b$ belongs at least to $\Lun_{loc}$.

\begin{defn}
Given an initial condition in $\Linf$, a
solution of (\ref{trans_dim})-(\ref{initial}) is a function in
$\Linf([0,\infty) \times \Omega)$ satisfying for all $\phi \in \CCC^{\infty}_c
([0,\infty) \times \Omega)$
\begin{equation} \label{test}
\int_{[0,\infty) \times \Omega} u (\frac{\partial \phi}{\partial t} +
b \cdot \nabla_x \phi) = - \int_{\Omega} u^o \phi(0,\cdot)
\end{equation}
\end{defn}

\begin{defn}
We shall call renormalized solution a function $u$ in $\Lunloc([0,\infty) \times \Omega)$ such that $\beta(u)$ is a solution of (\ref{trans_dim}) with initial value $\beta(u^o)$, for all $\beta \in C^1_b(\RR)$, the set  of differentiable functions from $\RR$ to $\RR$ with bounded continuous derivative.
\end{defn}

{\bf Remark} In this definition, we do not ask $u$ to be a solution because if $u$ only belongs to $\Lunloc$, we cannot give a sense to the product $uv$. This is one of the reasons why we introduce this definition. But, this is of course an extension of the notion of solution. If $u \in \Linf$ is a renormalized solution, it may be shown using good $\beta$ that $u$ is a solution. 

We will give the next definition only for the case where $\Omega=\Pi^2$ or a bounded open subset of $\RR^2$ in order to simplify the presentation. We refer to \cite{DPL} for the adaptation to  the case of $\RR^n$ .

\begin{defn}
A flow defined almost everywhere (or a.e. flow) solving (\ref{ODE_dim})
is a function $X$ from $\RR \times \Omega$ to $\Omega$ satisfying
\begin{itemize}
\item[i.] $X \in \CCC(\RR, \Lun)^2$
\item[ii.] $\int_{\Omega} \phi(X(t,x))\,dx =\int_{\Omega} \phi(x)\,dx \quad \forall \phi \in \CCC^{\infty}, \quad \forall t \in \RR$ 
(preservation of the Lebesgue's measure)
\item[iii.] $X(s+t,x)=X(t,X(s,x))$ a.e. in x, $\forall s,t \in \RR$
\item[iv.] (\ref{ODE_dim}) is satisfied in the sense of distributions.
\end{itemize}
These properties implies that  
$$
\text{for a.e.} \; x \in \Omega, \;
\forall t \in \RR, \quad X(t,x)= x + \int_0^t b(X(s,x))\,ds.
$$ 
\end{defn}

Moreover, the useful following result is stated in \cite{Lio}.

\begin{prop}
The two following statements are equivalent
\begin{itemize}
\item[i.] For all initial condition $u^o \in \Lun$, there exists a unique stable renormalized solution of (\ref{trans_dim}).
\item[ii.] There exists a unique stable a.e. flow solution of (\ref{ODE_dim}).
\end{itemize}
Moreover the following condition (R) implies these two equivalent statements 
\begin{center}
(R) \hspace{1cm}  Every solution of (\ref{trans_dim}) belonging to $\Linf(\RR \times \Omega)$ is a renormalized solution.
\end{center}
\end{prop}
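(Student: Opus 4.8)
The plan is to establish the equivalence (i) $\Leftrightarrow$ (ii) first, and then to prove that (R) implies statement (i); since (i) and (ii) are equivalent, this yields (R) $\Rightarrow$ (ii) for free. A recurring tool is the conservation law obtained by testing (\ref{test}) with $\phi(t,x) = \chi(t)$, $\chi \in \CCC^\infty_c([0,\infty))$: because $b \cdot \nabla_x \phi$ then vanishes and $\dv\, b = 0$, any solution $v$ of (\ref{trans_dim}) satisfies $\int_\Omega v(t,x)\,dx = \int_\Omega v^o(x)\,dx$ for a.e.\ $t$. Applied to $\beta(u)$, this will control renormalized solutions quantitatively.

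For the implication (ii) $\Rightarrow$ (i), I would construct the solution from the flow by the transport formula $u(t,x) = u^o(X(-t,x))$; property (iii) makes $X(-t,\cdot)$ the inverse of $X(t,\cdot)$, and property (ii) gives $\|u(t,\cdot)\|_{\Lun} = \|u^o\|_{\Lun}$, so that $u \in \CCC(\R,\Lun)$ after approximating $u^o$ by continuous data. Since composition commutes with $\beta$, one has $\beta(u(t,x)) = \beta(u^o)(X(-t,x))$, again a transport of the datum $\beta(u^o)$; checking that such a transported datum solves (\ref{trans_dim}) distributionally (first for smooth $g$ via property (iv), then for general $g$ by density and measure preservation) shows that $u$ is a renormalized solution. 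Existence follows; uniqueness and stability transfer from the flow because, through the superposition principle, any renormalized solution is carried by the flow's trajectories and hence equals the transport formula.

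For the converse (i) $\Rightarrow$ (ii), I would regularize $b$ into divergence-free $b_\ep \in \CCC^\infty$ with classical flows $X_\ep$, and consider the measures $\Lambda_\ep^t = (\mathrm{id}, X_\ep(t,\cdot))_\# \mathcal{L}$ on $\Omega \times \Omega$, which solve the continuity equation driven by $b_\ep$ in the second slot. Eulerian stability from (i) passes to a limit $\Lambda_\ep^t \to \Lambda^t$, and uniqueness of renormalized solutions forces $\Lambda^t$ to concentrate on a graph $x \mapsto X(t,x)$, defining the a.e.\ flow; properties (i)--(iv) and measure preservation are inherited from $X_\ep$ and survive the limit, while independence of the regularization follows from uniqueness. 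This reconstruction is the main obstacle: the delicate steps are upgrading the weak convergence of the pushforwards to concentration of $\Lambda^t$ on a single-valued map, and verifying that the limit genuinely satisfies the semigroup identity (iii) almost everywhere.

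It remains to show (R) $\Rightarrow$ (i). For uniqueness, the difference $w$ of two $\Linf$ solutions sharing the datum $u^o$ is an $\Linf$ solution with zero initial value, hence renormalized by (R); choosing $\beta \in C^1_b(\RR)$ with $\beta \geq 0$, $\beta(0) = 0$ and $\beta(s) > 0$ for $s \neq 0$, for instance $\beta(s) = s^2/(1 + s^2)$, the conservation law gives $\int_\Omega \beta(w(t,x))\,dx = \int_\Omega \beta(0)\,dx = 0$, so $w \equiv 0$; the extension to $\Lun$ data follows by truncation. For existence and stability, solve the problem for the regularized fields $b_\ep$, extract a weak-$*$ limit $u \in \Linf$ which is a solution of (\ref{trans_dim}) by linearity, and invoke (R) to promote $u$ to a renormalized solution, the same compactness yielding stability. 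Combined with the equivalence, this proves all the assertions.
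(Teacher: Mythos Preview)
The paper does not prove this proposition: it is quoted from \cite{Lio}, with the surrounding remark that the method can be adapted from \cite{DPL}. There is therefore no in-paper argument to compare yours against; one can only check your sketch against the DiPerna--Lions framework those references set up.

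Your argument for (R) $\Rightarrow$ (i) is the standard one and is correct in outline: linearity makes the difference of two bounded solutions a solution with zero datum, (R) renormalizes it, and the conservation law applied to a nonnegative $\beta$ with $\beta^{-1}(0)=\{0\}$ forces it to vanish; existence and stability by regularization and weak-$*$ compactness are likewise routine, and the passage to $\Lun$ data by truncation is the right move.

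The equivalence (i) $\Leftrightarrow$ (ii) is where your sketch diverges from \cite{DPL,Lio} and becomes shakier. For (ii) $\Rightarrow$ (i) you obtain existence cleanly but then appeal to a ``superposition principle'' for uniqueness; this is either circular (superposition is normally \emph{derived} from PDE uniqueness) or imports Ambrosio's later and much heavier machinery. For (i) $\Rightarrow$ (ii) you push forward to $\Omega\times\Omega$ and try to show the limiting measure concentrates on a graph, correctly flagging this as the main obstacle. The route taken in \cite{DPL,Lio} bypasses both difficulties: once one has unique stable renormalized solutions, the solution operator $S_t:u^o\mapsto u(t,\cdot)$ on $\Linf$ commutes with every $\beta\in C^1_b$ and preserves the $\Lun$-norm, hence is a measure-preserving algebra homomorphism of $\Linf$; such a map is necessarily composition with a measure-preserving transformation, which defines $X(-t,\cdot)$, and the semigroup and ODE properties then follow from uniqueness and stability. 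This algebraic identification of the flow is both lighter and sharper than the Young-measure/disintegration argument you outline.
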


This method of resolution of  ODE's and associated transport equations was introduced by R.J. DiPerna and P.L. Lions in \cite{DPL}.  In this article, they show that if $b \in W^{1,1}_{loc}$, the problem (\ref{trans_dim})-(\ref{initial}) has a unique renormalized solution $u$. In fact, even if it is not stated in these terms in their article, we can adapt the method used in it to prove Proposition 1 and the fact that (R) is true when $b \in W^{1,1}_{loc}$. In our paper we will show that (R) holds provided that $b \in \Lde_{loc}$ and  that the following condition ($P_x$) on the local direction of $b$ is true for a sufficiently large set of points $x$ 
$$ 
(P_x) \qquad \exists \xi \in \RR^2,\  \alpha > 0,\  \epsilon > 0 \quad \text{such that for almost all}\quad y \in B(x,\epsilon) \quad  b(y) \cdot \xi \geq \alpha $$
This is a local condition and the quantities $\xi$, $\alpha$, $\epsilon$ depend on $x$.

We will also show that (R) still holds in the case of a physical Hamiltonian $H(x,y)= y^2/2 + V(x)$ with $V' \in \Lun_{loc}$.

This paper is a extension of L. Desvillettes and
F. Bouchut \cite{BD}, in which similar results are shown when $b$ is
continuous. The authors use the fact that since we have an
Hamiltonian, we can integrate the ODE to obtain a one dimensionnal
problem, that we are able to solve. We will adapt this method with
less regularity on $b$.

\section{Main result}
Since we are in dimension two and that $div(b)=0$, there exists a
scalar function $H$ (the hamiltonian) such that $\nabla H^{\perp} =
b$. If $b$ belongs to $\Lp$, then $H$ is in $W^{1,p}$. 

\begin{thm}
Let $\Omega'$ be an open subset of $\Omega$. Assume that $b \in
\Lde_{loc}(\Omega')$ and $(P_x)$ holds for every $x \in \Omega'$,
Then the condition (R) holds in $\Omega'$.
\end{thm}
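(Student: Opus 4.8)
The strategy is to exploit the Hamiltonian structure to reduce the two-dimensional problem, by a change of variables adapted to the level sets of $H$, to a family of one-dimensional transport equations whose coefficient is bounded away from zero, for which renormalization is elementary. This is the low-regularity analogue of the reduction used by Bouchut and Desvillettes. The starting observation is that renormalization is a local property: $\beta(u)$ solves \eqref{trans_dim} if and only if \eqref{test} holds against every $\phi \in \Com$, and such $\phi$ may be taken supported in an arbitrarily small ball. Hence it suffices to establish (R) on a neighbourhood of each point of $\Omega'$. Fixing $x_0 \in \Omega'$ and invoking $(P_{x_0})$, after a rotation of coordinates we may assume $\xi = (1,0)$, so that on a ball $B = B(x_0,\ep)$ one has $b_1 \geq \alpha > 0$ a.e. Since $\nabla H^\perp = b$ we have $b_1 = -\partial_y H$, hence $\partial_y H \leq -\alpha$ a.e. on $B$.

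Because $H \in \Lde_{loc}(B)$ as a $W^{1,2}$ function, for a.e. fixed $x$ the map $y \mapsto H(x,y)$ is absolutely continuous and strictly decreasing, so $\Phi\colon (x,y)\mapsto (x,h):=(x,H(x,y))$ is, up to a null set, a bijection of $B$ with Jacobian $|\partial_y H| = b_1 \geq \alpha$. Since $b$ is tangent to the level sets $\{H=h\}$ (indeed $b\cdot\nabla H = \nabla H^\perp\cdot\nabla H = 0$), in the new variables the vector-field has no $h$-component and its $x$-component equals $c(x,h):=b_1\circ\Phi^{-1}\geq\alpha$. Thus, writing $\tilde u := u\circ\Phi^{-1}$, the equation formally becomes, for each fixed $h$, the one-dimensional transport equation $\partial_t\tilde u + c\,\partial_x\tilde u = 0$.

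For a.e. fixed $h$, since $0 < 1/c \leq 1/\alpha$, the map $z(x)=\int_{x_0}^x \frac{dx'}{c(x',h)}$ is a bi-Lipschitz reparametrization of the $x$-axis that turns the slice equation into the constant-coefficient equation $\partial_t\hat u + \partial_z\hat u = 0$, whose bounded solutions are the translates $\hat u(t,z)=\hat u_0(z-t)$. These manifestly renormalize: $\beta(\hat u)$ solves the same equation with datum $\beta(\hat u_0)$ for every $\beta\in C^1_b(\RR)$. Undoing the two changes of variables and integrating over $h$ (by Fubini together with the identity $dx\,dy = \frac{1}{b_1}\,dx\,dh$) recovers the assertion that $\beta(u)$ solves \eqref{trans_dim} on $B$ with datum $\beta(u^o)$, which is exactly (R) on $B$; a partition of unity then gives (R) on $\Omega'$.

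The delicate point is that $\Phi$ is only a Sobolev, not a Lipschitz, change of variables, since $\partial_x H$ is merely $\Lde$, so one cannot simply substitute smooth test functions, and must justify that the distributional formulation \eqref{test} is genuinely preserved under $\Phi$. The plan is to proceed by approximation: mollify to obtain smooth $b_n$ with $\nabla H_n^\perp = b_n$ and $\partial_y H_n \leq -\alpha$ on a slightly smaller ball, carry out the now-smooth change of variables for each $n$, and pass to the limit, the $\Lde_{loc}$ bound being what guarantees the strong convergence of the transformed coefficients and the vanishing of the error terms. Controlling these limits, in particular ensuring that the a.e.-slice renormalization survives the passage to the limit and the reassembly over $h$, is the step that must be carried out most carefully, and is where both the $\Lde$ hypothesis and the non-degeneracy $(P_x)$ are essential.
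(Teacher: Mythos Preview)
Your overall strategy---localize, straighten the flow via $\Phi(x)=((x-x_0)\cdot\xi,\,H(x))$, reduce to a one-dimensional equation with coefficient bounded below by $\alpha$, reparametrize to constant speed, and renormalize trivially there---is exactly the paper's. But your proof stops at the step you yourself flag as delicate, and the approximation scheme you propose in its place does not work as written. If you mollify $H$ to $H_n$ and use the smooth map $\Phi_n$, the key cancellation fails: $b\cdot\nabla(\psi\circ\Phi_n)$ acquires the extra term $(b\cdot\nabla H_n)\,(\partial_h\psi)\circ\Phi_n$, which is nonzero because $b=\nabla H^{\perp}$ is orthogonal to $\nabla H$, not to $\nabla H_n$. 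Hence for each fixed $n$ the transformed equation still carries an $h$-derivative and is \emph{not} one-dimensional, so you cannot renormalize slice by slice and then pass to the limit as you suggest. You would have to let $n\to\infty$ first to obtain the clean equation for $v=u\circ\Phi^{-1}$; but that passage demands control of $u\circ\Phi_n^{-1}\to u\circ\Phi^{-1}$ with $u$ only in $L^\infty$, of the varying image domains, and of the ratio $b_1\circ\Phi_n^{-1}/|\partial_y H_n|\circ\Phi_n^{-1}$, none of which you address. And even then the converse direction (pulling $\beta(v)$ back to $\beta(u)$) still requires knowing that $\Phi^{-1}$ is a legitimate Sobolev change of variables, which is precisely the point the approximation was meant to sidestep.

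The paper avoids approximation entirely. It proves directly (its Lemma~1) that when $H\in W^{1,p}$ with $p\geq 2$ and $|\partial_{x_2}H|\geq\alpha$, the map $\Phi$ is an a.e.\ bijection of $U$ onto an open set $V$, that $\Phi^{-1}\in W^{1,p}(V)$, that both maps preserve null sets, and that the change-of-variables formula $\int_U (f\circ\Phi)\,|D\Phi|=\int_V f$ holds for $f\in L^\infty$. Given this, for smooth $\psi$ supported in $[0,\infty)\times V$ the function $\phi=\psi\circ\Phi$ lies in $W^{1,p}_0([0,\infty)\times U)$ and is therefore an admissible test function in the weak formulation. Because $\Phi$ is built from $H$ itself, one has the exact identity
\[
b\cdot\nabla(\psi\circ\Phi)
= b_1\bigl((\partial_{x_1}\psi)\circ\Phi - b_2\,(\partial_{x_2}\psi)\circ\Phi\bigr)
+ b_2\,b_1\,(\partial_{x_2}\psi)\circ\Phi
= b_1\,(\partial_{x_1}\psi)\circ\Phi,
\]
and the hypothesis $p\geq 2$ enters precisely here, to ensure the intermediate product $b_1 b_2$ lies in $L^1_{loc}$ so that this computation is legitimate. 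Applying the change-of-variables formula then shows at once that $v=u\circ\Phi^{-1}$ solves $\partial_t(v/J)+\partial_{x_1}v=0$ on $V$ with $J=|b_1|\circ\Phi^{-1}\geq\alpha$; the converse direction tests this equation against $\phi\circ\Phi^{-1}\in W^{1,1}_0$. No mollification of $b$ is involved; the $L^2$ assumption is used only for the algebraic cancellation above and for the regularity of $\Phi^{-1}$.
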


\textbf{Remarks}
\begin{itemize}
\item[i.] Two is the critical exponent. It corresponds to the critical
  case $W^{1,1}$ in \cite{DPL} since in two dimension we have the Sobolev
  embedding from $W^{1,1}$ to $\Lde$. In the fourth paragraph, we shall
  describe a flow which is in $\Lp$ for all $p < 2$, which satisfy the
  condition $(P_x)$ everywhere but for which uniqueness is false.
\item[ii.] This theorem does not extend the result in \cite{DPL} in this
  particular case because a vector-fields in $W^{1,1}$ does not
  necessary satisfy the condition $(P_x)$. We can construct divergence
  free vector-fields in $W^{1,1}$ which does not satisfy the
  condition $(P_x)$ at any point $x$.  
\item[iii.] Our method allow to prove the existence and the uniqueness
  directly (i.e. without using (R)), but it raises many difficulties
  concerning localisation and the addition of critical points.
\item[iv.] Here we state a result for a subset of $\Omega$. Of course,
  a particular case of interest is the case when $\Omega'=\Omega$, where we
  may then use proposition 1 to obtain the existence and the uniqueness of
  an a.e. flow and of the solution of the transport equation. But the
  case $\Omega'\subsetneq \Omega$ will be useful when we will shall
  take into account some points where $(P_x)$ is not true.
\end{itemize}

\begin{proof}
We shall prove this result in several steps. First, we shall state and
prove some results about a change of variables. Then, we shall justify
its application in formula (\ref{trans_dim}), and obtain a new transport
equation. Finally, we reduces this problem to a one dimensionnal one,
that we are able to solve.

\textbf{Step 1.}  A change of variable.\\
It is sufficient to show the result stated in Theorem 1 locally. Then,
we shall work in a bounded neighbourhood $U$ of $x_0$, in which we
assume that  $b \cdot \xi > \alpha$ a.e. as in $(P_x)$. We define
$\Phi$   on $U$ by
$$ \Phi (x) = ( (x - x_0) \cdot \xi ,H(x))$$

  We wish to use $\Phi$ as a change of variable. For this, we use
  the following lemma

\begin{lemma}
  Assume that $H \in W^{1,p}(U)$ for $p \geq 2$, then there exist a
  bounded connected open set $V$ containing $(0,0)$ and $\Phi^{-1} \in
  W^{1,p}(V)$ such that
\begin{center}
\text{for almost all} $x \in U$, $\Phi(x) \in V$ and $\Phi^{-1} \circ
\Phi (x) = x$, \\
\text{for almost all} $y \in V$, $\Phi^{-1}(y) \in U$ and $\Phi \circ
\Phi^{-1} (y) = y$, \\
\end{center}

  $\Phi$ and $\Phi^{-1} $ leave invariant zero-measure sets. \\
  Moreover, we have for $f \in \Linf(V)$ the following formula:
\begin{equation} \label{cov}
\int_U f \circ \Phi (x) |D\Phi(x)| \, dx = \int_V f(y) \, dy
\end{equation}
\end{lemma}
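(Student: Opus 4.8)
The plan is to exploit the special, triangular structure of $\Phi$. After an orthogonal change of coordinates we may assume $\xi = e_1$, so that $\Phi(x_1,x_2) = (x_1 - c,\, H(x_1,x_2))$ with $c = x_0\cdot e_1$; the first coordinate is then an affine homeomorphism and all the content sits in the second coordinate. With this normalization $D\Phi = \left(\begin{smallmatrix} 1 & 0 \\ \partial_1 H & \partial_2 H\end{smallmatrix}\right)$, so $\det D\Phi = \partial_2 H$, and the hypothesis $(P_x)$, i.e. $b\cdot\xi\ge\alpha$, reads $|\partial_2 H| = |\det D\Phi|\ge\alpha$ a.e. In particular $\partial_2 H$ has a constant sign, say $\partial_2 H\le-\alpha$, so for a.e. fixed $x_1$ the fibre map $\psi_{x_1}: x_2\mapsto H(x_1,x_2)$ is strictly monotone.

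First I would invoke the absolute-continuity-on-lines (ACL) characterization of $W^{1,p}$: a representative of $H$ is absolutely continuous on almost every vertical fibre, with classical derivative equal a.e. to $\partial_2 H$. On such a fibre $\psi_{x_1}$ is therefore an absolutely continuous, strictly monotone bijection onto an interval $J_{x_1} = \psi_{x_1}(I_2)$, and since $|\psi_{x_1}'|\ge\alpha$ its inverse $\psi_{x_1}^{-1}$ is Lipschitz with constant $1/\alpha$. This lets me define, fibrewise, a candidate inverse $\Phi^{-1}(s,h) = (s+c,\ \psi_{s+c}^{-1}(h))$ on the image region, and it immediately yields the two a.e. identities $\Phi^{-1}\circ\Phi = \mathrm{id}$ and $\Phi\circ\Phi^{-1} = \mathrm{id}$ together with invariance of null sets: $\Phi$ sends null sets to null sets because an absolutely continuous function has the Lusin $(N)$ property on each fibre (conclude by Fubini), and $\Phi^{-1}$ does so because it is fibrewise Lipschitz.

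Formula (\ref{cov}) is then a one-dimensional computation followed by Fubini. Writing $|D\Phi| = |\partial_2 H|$ and integrating first in $x_2$, the inner integral $\int_{I_2} f(x_1-c, H(x_1,x_2))\,|\partial_2 H|\,dx_2$ is exactly the $1$D change of variable $h = \psi_{x_1}(x_2)$ for the absolutely continuous monotone map $\psi_{x_1}$, equal to $\int_{J_{x_1}} f(x_1-c,h)\,dh$; integrating in $x_1$ reconstitutes $\int_V f$. The $W^{1,p}$ bound on $\Phi^{-1}$ comes from the identity $D\Phi^{-1}(y) = (D\Phi)^{-1}(\Phi^{-1}(y))$, where $(D\Phi)^{-1} = \left(\begin{smallmatrix} 1 & 0 \\ -\partial_1 H/\partial_2 H & 1/\partial_2 H\end{smallmatrix}\right)$: since $|\partial_2 H|\ge\alpha$ and $\partial_1 H\in L^p$, the cofactor entries lie in $L^p$, and applying (\ref{cov}) with the bounded Jacobian $1/|\partial_2 H|\le 1/\alpha$ transports this $L^p$ bound from $U$ to $V$, giving $\|D\Phi^{-1}\|_{L^p(V)}\le C(\alpha)\,\|\nabla H\|_{L^p(U)}$.

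The genuine obstacle is not the formal computation but the \emph{regularity and topology} hidden in the critical case $p=2$: there $H$ need not be continuous, so $\Phi$ is not a homeomorphism, the image $\Phi(U)$ is not obviously open or connected, and it is not clear a priori that the fibrewise candidate $\Phi^{-1}$ is ACL in the transverse variable $s$ (i.e. along a level set of $H$). I would dispose of this by approximation: mollify $H$ to $H_\eta\in C^\infty$; since $\partial_2 H\le-\alpha$ and mollification preserves a one-sided bound, $\partial_2 H_\eta\le-\alpha$, so each $\Phi_\eta = (x_1-c,\, H_\eta)$ is a genuine diffeomorphism onto an open connected set $V_\eta$, for which the lemma is elementary. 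The estimates above are uniform in $\eta$ because they depend on $H_\eta$ only through $\alpha$ and $\|\nabla H_\eta\|_{L^p}\le\|\nabla H\|_{L^p}$; hence $\{\Phi_\eta^{-1}\}$ is bounded in $W^{1,p}$, and the two-sided Jacobian control (namely $|\det D\Phi_\eta|\ge\alpha$ from below and fibrewise $1/\alpha$-Lipschitz inverses) is exactly what is needed to pass to the limit, to identify the limit with $\Phi^{-1}$ and with a limiting open connected $V$, and to preserve both the Lusin $(N)$ property and (\ref{cov}) in the limit. For $p>2$ the approximation is unnecessary, since Morrey's embedding already makes $H$ continuous and $\Phi$ a monotone homeomorphism.
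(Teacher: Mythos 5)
Your fibrewise core---the ACL representative of $H$, strict monotonicity of $x_2\mapsto H(x_1,x_2)$ forced by $|\partial_2 H|\ge\alpha$, the $1/\alpha$-Lipschitz fibre inverses, and (\ref{cov}) obtained as a one-dimensional change of variables followed by Fubini---is exactly the paper's argument, so the proposal is correct in substance. Where you genuinely diverge is on the two delicate points you rightly identify at the end. First, openness and connectedness of $V$: the paper needs no approximation here, because it chooses $U=(-\eta,\eta)^2$ so that the two horizontal boundary segments are themselves ``good'' ACL lines, and then defines $V=\{(y_1,y_2): H(y_1,-\eta)<y_2<H(y_1,\eta)\}$ directly; the two boundary curves are continuous and separated by at least $2\eta\alpha$, so $V$ is open and connected with no limiting procedure, even at $p=2$ where $H$ need not be continuous on $U$. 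Second, the membership $\Phi^{-1}\in W^{1,p}(V)$: the paper computes the weak derivative $\partial_{x_1}\Phi^{-1}_2=-(b_2/|b_1|)\circ\Phi^{-1}$ explicitly, by testing against $\partial_{x_1}\phi$, pulling back with (\ref{cov}), integrating by parts and using $\diver b=0$; this is precisely where $p\ge 2$ is used (to make sense of the products $b_ib_j\,\partial\phi\circ\Phi$ in $L^1$). You replace this with mollification and weak compactness of $\Phi_\eta^{-1}$ in $W^{1,p}$. That route can be made to work and has the merit of not invoking $\diver b=0$, but the sentence claiming the Jacobian control ``is exactly what is needed to pass to the limit'' hides the real work: you must show that compact subsets of the limiting $V$ are eventually contained in the moving domains $V_\eta$, and that $\Phi_\eta^{-1}\to\Phi^{-1}$ a.e.\ so that the weak limit of $D\Phi_\eta^{-1}$ is identified with $(D\Phi)^{-1}\circ\Phi^{-1}$ (e.g.\ via uniform convergence of $H_\eta(x_1,\cdot)$ on a.e.\ fibre and the uniform Lipschitz bound on the fibre inverses). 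In short: same skeleton, but the paper resolves the two genuinely hard points by an explicit choice of $V$ and a direct duality computation, whereas you resolve them by an approximation scheme whose limit passage is asserted rather than carried out.
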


\begin{figure}[b]
\centering
\includegraphics[width=\textwidth]{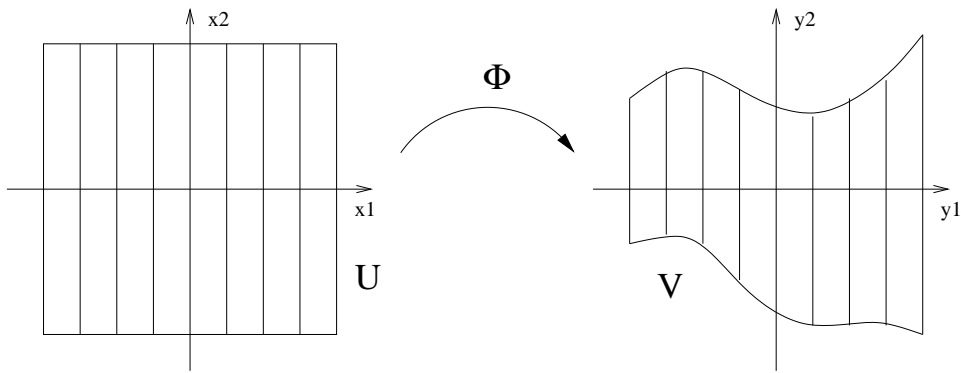}
\caption{ The $\Phi$ map}
\end{figure}

\begin{proof}[Proof of the lemma]
Without loss of generality, we may assume that $x_0=0$, $\xi=(-1,0)$,
$U=(-\eta, \eta)\times (-\eta, \eta)$. According to \cite{Zie} we can
assume, since $H$ is $W^{1,p}$, that $H$ is absolutely continuous on
almost all lines parallel to the coordinate axes and that it is true in
particular for the lines $\{y = \pm \eta \}$. Then we define a open
set $V$ by 
$$ V=\{(y_1,y_2) \in \RR^2 \mid H(y_1,-\eta) < y_2 < H(y_1,\eta)\} $$

 To show that $V$ is connected we have to show that $H(x_1,-\eta)
< H(x_1,\eta)$ for all $x_1 \in (-\eta, \eta)$. But we have $|b_1(x)| >
\alpha$ for almost all $x \in U$ then $H(x_1,\eta) - H(x_1,-\eta) >
2\eta \alpha$ for almost all $x_1 \in (-\eta, \eta)$, then for all
those $x_1$ by continuity.

$\Phi$ preserve the first coordinate, and for almost
all $x_1 \in (-\eta, \eta)$, $H(x_1, \cdot)$ is a strictly increasing
homeomorphism from $(-\eta, \eta)$ to $(H(x_1,-\eta),H(x_1,\eta))$. 
Hence we can define a suitable mesurable $\Phi^{-1}$. 

Now, we can prove the equation (\ref{cov}) using Fubini's
theorem. First we consider the case when $f$ is continuous. Then, we
have   
$$\int_U f \circ \Phi (x) |D\Phi(x)| \, dx = \int_{-\eta}^{\eta}
\left({\int_{-\eta}^{\eta} f(x_1,H(x_1,x_2))|b_1(x_1,x_2)| \, dx_2}\right) dx_1
$$

Next, if $F$ is $\CCC^1(\RR,\RR)$ and $\phi$ is in
$W^{1,p}([a,b])$, then $F \circ \phi$ is in $W^{1,p}([a,b])$ and $(F
\circ \phi)' = (F' \circ \phi)\phi'$. We now use this fact with $F$ a
primitive of $f$. Therefore, we can write 
$$ \int_{-\eta}^{\eta} f(x_1,H(x_1,x_2)) |b_1|(x_1,x_2) \, dx_2 =
\int_{H(x_1,-\eta)}^{H(x_1,\eta)} f(x_1,y) \, dy $$
And if we use Fubini's theorem again, we obtain the result.

Now, we prove (\ref{cov}) for an arbitrary function in $\Linf$. If $O$ is an
open subset of $V$, we choose a sequence of $f_n$ continuous such that
$f_n \rightarrow \chi_O$ everywhere when $n$ goes to $\infty$. By
increasing convergence, the result is still true for $\chi_O$. We have
it for the caracteristic function of an open set. If we use the fact
that $|b_1| \geq \alpha$, we obtain the inequality 
$$ \lambda(\Phi^{-1}(O)) \leq \frac{1}{\alpha} \lambda(O) $$
where $\lambda$ is the Lebesgue measure. Next, if $E$ is a zero measure
subset of $V$, we obtain (using the above inequality with open set of
small measure containing $E$) that $\Phi^{-1}(E)$ is also a
zero-measure set.

Now, if we approximate a $\Linf$-function $f$ by a sequence of
continuous functions $f_n$ converging to $f$ a.e., then the sequence
$f_n \circ \Phi$  converges to $f \circ \Phi$ a.e. and with the
dominated convergence theorem, we obtain the result for $f$.

The formula (\ref{cov}) may  be rewritten as follows
$$\int_U f(x) |D\Phi(x)| \, dx = \int_V f \circ \Phi^{-1} (y) \, dy
$$
By approximation, it  is always true provided the left hand side is
meaningful, as it is the case, for instance when $f$ belongs to $\LL^q(U)$,
with $q$ the conjugate exposant of $p$ ( $p^{-1}+q^{-1}=1$). And if $f
\in \LL^{a}(U)$, then $f \circ \Phi^{-1}$belongs to $\LL^b(V)$ with $b=a/q$.

To show that $\Phi^{-1}$ belongs to $W^{1,p}(V)$, and that
$D(\Phi^{-1})=(D\Phi)^{-1} \circ \Phi^{-1}$, the
most difficult case is to show that 
\begin{equation} \label{der}
\frac{\partial \Phi^{-1}_2}{\partial x_1} = -(\frac{b_2}{|b_1|}) \circ
\Phi^{-1} 
\end{equation}
First, since $b_2 \in \Lp(U)$ and $|b_1| > \alpha$, we can use the
change of variables to deduce
$$\int_V |\frac{b_2}{b_1}|^p \circ \Phi^{-1} = \int_U b_2^p b_1^{1-p}
$$
Hence, the right handside of (\ref{der}) belongs to $\Lp$.

Then, let $\phi$ be in $\CCC^{\infty}_o(V)$, we have
\begin{eqnarray*}
\int_V  \Phi_2^{-1} (y) \frac{\partial \phi}{\partial x_1} \, dy & =
& \int_U x_2 \frac{\partial \phi}{\partial x_1} \circ \Phi(x) \, |b_1(x)|\, dx
\\
 & = & \int_U x_2 \Big(\frac{\partial (\phi \circ \Phi)}{\partial x_1}
 (x) |b_1(x)| - \frac{\partial (\phi \circ \Phi)}{\partial
   x_2}(x) b_2(x) \Big) \, dx \\
 & = & \int_U \phi \circ \Phi(x) b_2(x) \, dx \\
 & = & \int_V \phi(y) \frac{b_2}{|b_1|} \circ \Phi^{-1}(x) \,dx \\
\end{eqnarray*}
and this is the expected result. To obtain the second line from the
first, we write  
$$ \partial_{x_1} (\phi \circ \Phi) = \partial_{x_1} \phi \circ \Phi -
b_2 \partial_{x_2} \phi \circ \Phi $$
$$ \partial_{x_2} (\phi \circ \Phi) = b_1 \partial_{x_2} \phi \circ
\Phi $$
And when $p \geq 2$, these two quantities belong to $\Lde$ and we may
multiply the first by $b_1$, the second by $-b_2$ and add them to
obtain the desired identity. To obtain the third line from the second,
we use an integration by parts and the fact that $div \, b =0$.
\end{proof}

\textbf{Step 2.} Equivalence with a new transport equation.

We now wish to apply the change of variables in the
formula (we recall that we assume that $\xi=(-1,0)$)
\begin{equation} \label{transint}
\int_{[0,\infty) \times U} u(\partial_t \phi + b.\nabla \phi) = -\int_U
u^o\phi^o 
\end{equation}

Since $u$ belongs to $\Linf(U)$, this expression make sense for
$\phi$ in $W^{1,q}_0 ([0,\infty) \times U)$ (here and below $q$ is always the
conjugate exponent of $p$). But, we want to apply (\ref{transint}) with
$\phi(t,x) = \psi(t,\Phi(y))$, where $\psi \in \Com([0,\infty)
\times V)$. In this case $\phi$ will belong to $W^{1,p} ([0,\infty)
\times U)$ and will also have a compact support because of the form of
$\Phi$. In addition, since  $p \geq 2$, we may write
 \begin{eqnarray*}
b \cdot \nabla \phi &=& b_1(\partial_{x_1} \psi \circ \Phi- b_2
\partial_{x_2} \psi \circ \Phi) + b_2
\, b_1 \partial_{x_2} \psi \circ \Phi\\
 &=& b_1 \partial_{x_1} \psi \circ \Phi
\end{eqnarray*}
and we obtain, denoting by $v(t,y)=u(t,\Phi^{-1}(y))$ and $J=|b_1|
\circ\Phi^{-1}$ 
$$ \int_{[0,\infty)\times V} v(\frac{1}{J(y)} \partial_t \psi(y)+
\partial_{x_1} \psi(y)) = -\int_V \frac{v^o \psi^o}{J} $$
for all $\psi$ in  $\Com([0,\infty) \times V)$. In other words, $v$ is
solution in $V$ (in the sense of the distributions) of
\begin{equation} \label{new}
\partial_t (\frac{v}{J}) + \partial_{x_1} v = 0
\end{equation}
with the initial condition $(v/J)(0,\cdot)=v^o/J$.

Conversely, if $v \in \Linf([0,\infty) \times V)$ is a solution of
(\ref{new}), we may test it against functions $\psi$ in $W^{1,1}_0
([0,\infty) \times V)$, and if $\phi$ is in $\Com([0,\infty) \times U)$ then
$\phi \circ \Phi^{-1}$ is in  $W^{1,1}_0 ([0,\infty) \times V)$. Thus we
may follow the above argument backwards, and we obtain that
(\ref{new}) is equivalent to (\ref{trans_dim}).

\textbf{Step 3.} Solution of the one dimensionnal problem.

  In view of the precedent steps, it is sufficient for us to show that (R) hold
for the equation (\ref{new}). But, in this equation there is no
derivative with respect to $y_2$. Therefore, it is equivalent to say that for
almost all $y_2$, $\partial_t (v/J) + \partial_{x_1} v = 0$ on the set
$\RR \times V_{y_2}$ with the good initial conditions (here
$V_{y_2}=\{y \in \RR \;|(y,y_2) \in V\}$). This would be obvious if $V$
were of the form $(a,b)\times (c,d)$, but we can always see $V$
as a countable union of such rectangular sets. And since an open
subset of $\RR$ is a countable union of open intervals, we just have
to show that the property (R) is true for the equation (\ref{new}) on
an interval $I=(a,b)$ of $\RR$, with $J \geq \alpha$ a.e. on $I$. 

Let $F$ be a primitive of $1/J$. F is continuous, strictly increasing
on $(a,b)$ onto $(F(a),F(b))$, and its inverse $F^{-1}$ belongs to
$W^{1,1}(F(a),F(b))$. Again, we may performe the change of
variables $y \mapsto z=F(y)$ and we obtain that
the equation (\ref{new}) on I is equivalent to 
\begin{equation} 
\partial_t w + \partial_{z} w = 0  \qquad \text{on} \quad [0,\infty)
\times (F(a),F(b))
\end{equation}
where $w(t,z)=v(t,F^{-1}(z))$. For this equation the property (R) is
true. In fact we have a flow  $ X(t,x)= F^{-1}(F(x)+t)$ for
(\ref{new}), but we need to be careful because we are not exactly on
the whole line and so this quantity is not defined for all~$t$. 
\end{proof}

\section{Critical points}
In the preceding result, we assumed that the condition $(P_x)$
was true for all $x$. We want here to take into account possible
critical points. However, since we only assume that $b \in \Lun_{loc}$, we
cannot define critical points (of the Hamiltonian) as points where $b$ vanishes
(the usual notion when the flow is continuous). In some sense,
critical points mean for us all those points where $(P_x)$ is not
true. In fact, this yields a ``larger'' set of critical points. 

\subsection{Isolated critical points}
Our first result is the following

\begin{cor}
If $b$ satisfies $(P_x)$ everywhere in $\Omega$ except on a set of
isolated points, then the (R) hypothesis holds.
\end{cor}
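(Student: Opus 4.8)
The plan is to deduce the corollary from Theorem 1 by a localisation and cutoff argument, the only genuinely new ingredient being that an isolated point carries no $W^{1,2}$-capacity in dimension two. Let $S$ be the set of points where $(P_x)$ fails. By hypothesis $S$ consists of isolated points, so it has Lebesgue measure zero and $\Omega' := \Omega\setminus S$ is open. Since $(P_x)$ holds at every point of $\Omega'$, Theorem 1 gives that (R) holds in $\Omega'$: any $u\in\Linf$ solving (\ref{trans_dim}) restricts to an $\Linf$ solution on $\Omega'$ (the equation being local) and is therefore a renormalized solution there. The goal is to upgrade this to a renormalized solution on all of $\Omega$, that is, to show that for every $\beta\in C^1_b(\RR)$ and every $\phi\in\Com([0,\infty)\times\Omega)$ the weak formulation (\ref{test}) holds with $u,u^o$ replaced by $\beta(u),\beta(u^o)$.

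First I would localise around the critical points met by $\phi$. Because these points are isolated, $\mathrm{supp}\,\phi$ contains only finitely many of them; around each such $x_i$ I introduce a logarithmic cutoff $\chi_\delta$, equal to $0$ on $B(x_i,\delta)$, equal to $1$ outside $B(x_i,\sqrt\delta)$, and interpolating as a function of $\log|x-x_i|$ on the annulus, the finitely many cutoffs having disjoint supports for $\delta$ small. Then $\phi\chi_\delta$ is supported in $\Omega'$ and is admissible for the renormalized equation there, giving
\begin{equation*}
\int \beta(u)\,\bigl(\partial_t(\phi\chi_\delta)+b\cdot\nabla(\phi\chi_\delta)\bigr) = -\int \beta(u^o)\,(\phi\chi_\delta)(0,\cdot).
\end{equation*}
Since $\chi_\delta$ does not depend on $t$, expanding the derivatives splits the left-hand side into a main term $\int \beta(u)\,\chi_\delta\,(\partial_t\phi + b\cdot\nabla\phi)$ and an error term $\int \beta(u)\,\phi\, b\cdot\nabla\chi_\delta$.

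Then I would let $\delta\to0$. The main term and the right-hand side converge to the desired expressions by dominated convergence, using $\chi_\delta\to1$ a.e.\ (as $S$ is null), $\beta(u)\in\Linf$, and $b\cdot\nabla\phi\in\Lun$. Everything rests on showing the error term vanishes: bounding $|\beta(u)\phi|$ by a constant and applying Cauchy--Schwarz gives
\begin{equation*}
\Bigl|\int \beta(u)\,\phi\, b\cdot\nabla\chi_\delta\Bigr| \leq C\,\|b\|_{\Lde(A_\delta)}\,\|\nabla\chi_\delta\|_{\Lde(A_\delta)},
\end{equation*}
where $A_\delta$ is the union of the annuli. A direct computation with the logarithmic profile yields $\|\nabla\chi_\delta\|_{\Lde}^2 \sim 1/\log(1/\delta)\to0$, while $\|b\|_{\Lde(A_\delta)}$ stays bounded (in fact tends to $0$ by absolute continuity of the integral, since $b\in\Ldeloc$). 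Hence the error tends to $0$, and passing to the limit recovers (\ref{test}) for $\beta(u)$ on all of $\Omega$, which is exactly (R) in $\Omega$.

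The main obstacle is precisely this error estimate, and it is where the critical exponent $p=2$ in dimension two is used. The logarithmic cutoff is the sharp choice making $\|\nabla\chi_\delta\|_{\Lde}\to0$: it is the vanishing of the $2$-capacity of a point in $\RR^2$ that allows an isolated critical point to be crossed without spoiling renormalization. In $\Lp$ with $p<2$ no such cutoff would be available, in agreement with the counterexample announced in Remark~i.
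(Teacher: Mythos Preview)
Your argument is correct and follows the same strategy as the paper: apply Theorem~1 on $\Omega\setminus S$, multiply the test function by a cutoff vanishing near each isolated critical point, and kill the commutator term by Cauchy--Schwarz using $b\in\Ldeloc$. The only difference is the choice of cutoff: the paper takes a plain rescaling $\psi_\epsilon=\psi(\cdot/\epsilon)$, for which $\|\nabla\psi_\epsilon\|_{\Lde}=\|\nabla\psi\|_{\Lde}$ is merely bounded (scale invariance of the $\Lde$-norm of a gradient in dimension two) and the vanishing of the error comes instead from $\|b\|_{\Lde(B_\epsilon)}\to0$ by absolute continuity; your logarithmic cutoff makes $\|\nabla\chi_\delta\|_{\Lde}\to0$ directly, which is a slightly stronger (capacity-type) statement but not needed here.
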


\begin{proof}
Without loss of generality we may assume that $\Omega=\RR^2$, that
$(P_x)$ holds everywhere 
except at the origin $(0,0)$ and that $b \in \Lde$. We take $\psi \in
\Com(\RR)$ so that $\psi \equiv 1$ on a  neighbourhood of $(0,0)$ and
vanishes outside the ball $B_1$ of radius~$1$. We define
$\psi_\epsilon = \psi( \frac{\cdot}{\epsilon})$.\\ 
Let $\phi \in \Com([0,\infty) \times \RR^2)$, $\beta \in \CCC^1(\RR)$ and
$u$ be a solution of the transport equation on $\RR^2$, then $
(1-\psi_\epsilon) \phi \in \Com(\RR^2 \backslash \{(0,0)\}$, and since $u$
is a renormalized solution on $\RR^2 \backslash \{(0,0)\}$, we may write
\begin{equation}
\langle \partial_t \beta(u) + div(b \beta(u)), (1-\psi_\epsilon)
\phi \rangle = \int_{\RR^2} \beta (u^o) (1-\psi_\epsilon) \phi^o 
\end{equation}
\begin{multline}
\text{i.e.} \qquad \int_{[0,\infty) \times \RR^2} \beta(u)(1-\psi_\epsilon)
(\partial_t \phi + b \cdot \nabla \phi) - \int_{[0,\infty) \times
  \RR^2} \beta(u) \phi b \cdot \nabla \psi_\epsilon \\
= -\int_{\RR^2} \beta (u^o) (1-\psi_\epsilon) \phi^o
\end{multline}
When $\epsilon$ goes to $ 0$, the first integral converges to $\langle
\partial_t \beta(u) + div(b \beta(u)), \phi \rangle$, the
second one converges to 0 since
\begin{eqnarray*}
\left| \int_{\RR^2} \beta(u) \phi \nabla \psi_\epsilon \right| & \leq & C
\|b\|_{\Lde(B_\epsilon)} \|\nabla \psi_\epsilon\|_{\Lde} \\
 & \leq & C \|\nabla \psi\|_{\Lde}\|b\|_{\Lde(B_\epsilon)}
\end{eqnarray*}
and the right hand side converges to $-\int_{\RR^2} \beta (u^o)
\phi^o$. 

We conclude that  
$$\langle \partial_t \beta(u) + div(b
\beta(u)), \phi \rangle = \int_{\RR^2} \beta (u^o) \phi^o$$
 for all $\phi \in \Com([0,\infty) \times \RR^2)$. Hence $u$ is a renormalized
solution.
\end{proof}

\subsection{A result with more regularity on $H$}
The above result, of course, does not allow for many critical
points. But we can allow much more with stronger conditions on $H$. First,
points where there exists a neighbourhood on which $b$ vanishes,
are obviously easy to handle. We shall call $O$ the set of all these
points, and $P$ the set of the points where $(P_x)$ is true. We denote
$Z = (O \cup P)^c$ (this complementary is taken in $\Omega$). It is
closed, since $O$ and $P$ are open . Then we have the following corolary. 

\begin{cor}
Assume that $H$ is continuous, $Z$ is a set of zero-measure in $\RR^2$
and $H(Z)$ is a set of zero-measure in $\RR$. Then (R) still holds.
\end{cor}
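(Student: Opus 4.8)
The plan is to adapt the excision argument of Corollary 1, but to cut out a full neighbourhood of the bad set $Z$ rather than an isolated point, using a cut-off that depends only on the Hamiltonian $H$. First I would record that $u$ is already a renormalized solution on the open set $O\cup P$: on $P$ this is exactly Theorem 1 (the hypothesis $(P_x)$ holds throughout the open set $P$ and $b\in\Lde_{loc}$), while on $O$ one has $b=0$ locally, so $\partial_t\beta(u)=0$ and $u$ is trivially renormalized; since being a renormalized solution is a local property, it holds on $O\cup P=\Omega\setminus Z$. The whole point is therefore to show that the renormalization defect $\mathcal{E}[\phi]:=\int_{[0,\infty)\times\Omega}\beta(u)(\partial_t\phi+b\cdot\nabla\phi)+\int_\Omega\beta(u^o)\phi^o$ still vanishes, for every $\phi\in\Com([0,\infty)\times\Omega)$ and every $\beta\in C^1_b(\RR)$, when $\phi$ is no longer supported away from $Z$.

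Using that $H(Z)$ is a null set in $\RR$, I would choose decreasing open sets $U_\ep\supseteq H(Z)$ with $|U_\ep|\to 0$, together with open $V_\ep\supseteq H(Z)$ with $\overline{V_\ep}\subset U_\ep$, and smooth $\chi_\ep$ with $0\le\chi_\ep\le 1$, $\chi_\ep\equiv 1$ on $V_\ep$ and $\mathrm{supp}\,\chi_\ep\subset U_\ep$; then set $\psi_\ep:=\chi_\ep(H)$, a time-independent function in $W^{1,2}_{loc}\cap C$. Since $Z\subseteq H^{-1}(H(Z))\subseteq H^{-1}(V_\ep)$ and $H$ is continuous, $\psi_\ep\equiv 1$ on the open neighbourhood $H^{-1}(V_\ep)$ of $Z$, so $(1-\psi_\ep)\phi$ is supported in $O\cup P$ and is an admissible ($W^{1,2}$, by density) test function there; hence $\mathcal{E}[(1-\psi_\ep)\phi]=0$. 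The algebraic fact that replaces the quantitative estimate of Corollary 1 is that the commutator term now vanishes identically, $b\cdot\nabla\psi_\ep=\chi_\ep'(H)\,b\cdot\nabla H=\chi_\ep'(H)\,\nabla H^\perp\cdot\nabla H=0$ a.e., precisely because the cut-off follows the level sets of $H$. By linearity of $\mathcal{E}$ this gives $\mathcal{E}[\phi]=\mathcal{E}[\psi_\ep\phi]$ for every $\ep$, with $\mathcal{E}[\psi_\ep\phi]=\int\beta(u)\psi_\ep(\partial_t\phi+b\cdot\nabla\phi)+\int\beta(u^o)\psi_\ep\phi^o$.

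It then remains to let $\ep\to 0$. Writing $N:=\bigcap_\ep U_\ep\supseteq H(Z)$ (a null set in $\RR$) and $B:=H^{-1}(N)$, I would split $\mathcal{E}[\psi_\ep\phi]$ over $B$ and over $\{H\in U_\ep\setminus N\}$. On the second piece $|\{x\in\mathrm{supp}\,\phi:H(x)\in U_\ep\setminus N\}|\to 0$, so, $\beta(u)$ being bounded and $b\in\Lde$, that contribution tends to $0$ by absolute continuity of the integral. The contribution over $B$ is exactly $0$: one has $|B\cap P|=0$ and $|B\cap Z|=|Z|=0$, so $B$ agrees with $B\cap O$ up to a null set, there $b=0$ and $\beta(u)(t,\cdot)=\beta(u^o)$ is independent of $t$, and since $\int_0^\infty\partial_t\phi\,dt=-\phi^o$ the two terms cancel. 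Hence $\mathcal{E}[\psi_\ep\phi]\to 0$, giving $\mathcal{E}[\phi]=0$, i.e. $u$ is renormalized on $\Omega$.

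The step I expect to be the crux is the claim $|B\cap P|=0$: that the preimage of a null value-set meets the good region in a null set. This is where the hypotheses genuinely bite. On $O$ the level sets of $H$ may be fat, since $H$ is locally constant there, but this is harmless exactly because $b=0$ renders the dynamics stationary, which is what made the $B$-contribution cancel; the only danger is on $P$. There I would invoke the change of variables of Lemma 1: in a local chart $\Phi(x)=((x-x_0)\cdot\xi,H(x))$ has $H$ as its second coordinate and sends null sets to null sets, so $B\cap U=\Phi^{-1}(\{y\in V:y_2\in N\})$ is null because $\{y_2\in N\}$ is planar-null; covering $P$ by countably many such charts yields $|B\cap P|=0$. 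Equivalently one may quote the coarea formula, valid for $H\in W^{1,1}_{loc}$, together with $|\nabla H|=|b|>0$ a.e.\ on $P$. Everything else is routine dominated-convergence and absolute-continuity bookkeeping.
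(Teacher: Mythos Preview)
Your proof is correct and follows essentially the same route as the paper's: a cut-off of the form $\chi_\ep\circ H$ that equals $1$ near $Z$, the key cancellation $b\cdot\nabla(\chi_\ep\circ H)=0$ coming from $b=\nabla H^\perp$, and then the observation that the residual set $H^{-1}(\text{null set})$ lies, up to a null set, inside $O$ where $u$ is time-independent. The only noticeable difference is in how you justify $|H^{-1}(N)\cap P|=0$: you invoke Lemma~1's local straightening (or coarea), whereas the paper quotes the classical fact that for $H\in W^{1,p}$ one has $\nabla H=0$ a.e.\ on $H^{-1}(K)$ for any null $K$ (so $b=0$ a.e.\ there), combined with $\nabla H\neq 0$ a.e.\ on $P$; both arguments yield the same conclusion.
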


\textbf{Remarks}
\begin{itemize}
\item[i.] These conditions where introduced by L. Desvillettes and
  F. Bouchut in \cite{BD} in the case when $b$ is continuous. Here, we
  only rewrite their proof in a less regular case.
\item[ii.] If $p>2$, according to Sobolev embeddings, $H$ is automatically
  continuous. 
\item[iii.] We do not know if $H(Z)$ has zero-measure since we cannot
  apply Sard's lemma. 
\end{itemize}

\begin{proof}
Let $u$ be a solution of the transport equation (\ref{trans_dim}) in
$\Omega$, $\beta \in \CCC^1(\RR)$, $\phi$ a
$\CCC^{\infty}$-test function, and $K_o$ a compact set containing the
support of $\phi(t,\cdot)$ for all $t$. We define $Z_o=Z \cap K_o$ and $K=H(Z_o)$. Then $K$ is a
zero-measure compact set. Then, we can find functions $\chi_n \in
\Com(\RR)$ such that, $0 \leq \chi_n \leq 1$, $\chi_n \equiv 1$
on a neighbourhood of $K$ and $\chi_n \rightarrow \chi_K$, the
caracteristic function of $K$, when $n$ goes to $\infty$. We set
$\Psi_n= \chi_n \circ H$. $\Psi_n$ is continuous, belongs to
$W^{1,p}(\RR^2)$ and $\Psi_n \equiv 1$ on a neighbourhood of $Z_o$.

By theorem 1, $\beta(u)$ is a
solution of (\ref{trans_dim}) in $P$, and is also a solution in $O$, because
on this set $u$ is independent of the time. Since this  two sets are
open, $\beta(u)$ is a solution in $P \cup O$. $(1-\Psi_n)\phi$ is
continuous and belongs to $W^{1,p}([0,\infty) \times K_o)$ and has its support in
$[0,\infty) \times  (K_o \backslash Z_o)$. Hence, since $(K_o
\backslash Z_o) \subset P \cup O$ we can use it as a test fonction. We
have
$$ 
\langle \partial_t \beta(u) + div(b \beta(u)), (1-\Psi_n)
\phi \rangle = \int_{\Omega} \beta (u^o)(1-\Psi_n) \phi^o
$$
\begin{multline}
\text{i.e.} \qquad \int_{[0,\infty) \times \Omega} \beta(u)(1-\Psi_n)
  (\partial_t \phi + b \cdot \nabla \phi) - \int_{[0,\infty) \times
    \Omega} \beta(u)   b \cdot \nabla \Psi_n \\
= -\int_{\Omega} \beta (u^o)(1-\Psi_n)\phi^o 
\end{multline}
The second integral vanishes because $\nabla \Psi_n = (\Phi_n' \circ
H)\nabla H$ and $b=\nabla H ^{\perp}$.\\
The first integral converges by dominated convergence to 
$$ \int_{[0,\infty) \times H^{-1}(K)^c} \beta(u) (\partial_t \phi + b \cdot
\nabla \phi)$$ 
while the left hand side goes to $-\int_{H^{-1}(K)^c} \beta(u^o)
\phi^o$.

Then, to prove that (\ref{test}) holds, we just have to show that 
\begin{equation} \label{residu}
 \int_{[0,\infty) \times H^{-1}(K)} \beta(u) (\partial_t \phi + b \cdot
\nabla \phi) = -\int_{H^{-1}(K)} \beta(u^o) \phi^o 
\end{equation}
But  $H \in W^{1,p}(\RR^2)$ and $K$ is a zero-measure set, and this is a
classical result that in that case $\nabla H = 0$ a.e. on
$H^{-1}(K)$ (see for instance \cite{Bou}). Then, $b=\nabla
H^{\perp}=0$ a.e. on this set, and
$$ \int_{[0,\infty) \times H^{-1}(K)} \beta(u) (\partial_t \phi + b \cdot
\nabla \phi) =\int_{[0,\infty) \times H^{-1}(K)} \beta(u)\partial_t \phi$$

Moreover, $H^{-1}(K) \cap P$ is a set of zero-measure because on $P$,
$\nabla H \neq 0$ a.e.. Hence the following quantity will not change
if we integrate only on $H^{-1}(K) \cap (O \cup Z)$ or on $H^{-1}(K)
\cap O$ since $Z$ has zero-measure. But we already know that $u$ is
independent of the time on this set, then we can integrate in time to
obtain the equality (\ref{residu}).
\end{proof}

As a conclusion to this section we just wanted to say that we do not
know what happens when the condition $(P_x)$ is not true on a
sufficiently large set. Of course, we can construct divergence free
vector-fields which do not satisfy $(P_x)$ at every point, but it seems
difficult to work with such flows because their definition is complex.

\section{One example}
We observe in this section that the example introduced by R. DiPerna
and P.L. Lions
in \cite{DPL} provides an example of an divergence free vector fields $b$
such that $b \in \Linf_{loc}(\RR^2 \backslash (0,0))$, $b$ is in
$\Lp$ in a neighbourhood of the origin for all $p<2$ but not for
$p=2$, $b$ satisfies the condition $(P_x)$ everywhere, but there
exist several solutions to the transport equations and several
a.e. flows solving the associated ODE. 

\subsection{Definition of the vector-field}
We define the hamiltonian $H$ as follows (see fig \ref{figex})
$$ H(x) = \left\{\begin{array}{ll}
- \frac{x_1}{|x_2|} & \text{if  } |x_1| \leq |x_2| \\
-(x_1 - |x_2| +1) & \text{if  } x_1 > |x_2| \\
-(x_1 + |x_2| -1) & \text{if  } x_1 < -|x_2| 
\end{array} \right. $$
Then, $b$ is given by
$$b_1(x) =-\frac{\partial H}{\partial x_2} =-sign(x_2)\left( {
\frac{x_1}{|x_2|^2} 1_{|x_1| \leq |x_2|} + sign(x_1) 1_{|x_1| >
  |x_2|}} \right) $$
$$ b_2(x) = \frac{\partial H}{\partial x_1}= -\left( {\frac{1}{|x_2|}
1_{|x_1| \leq |x_2|} + 1_{|x_1|>|x_2|}}\right) $$

\begin{figure}[h]
\centering
\includegraphics{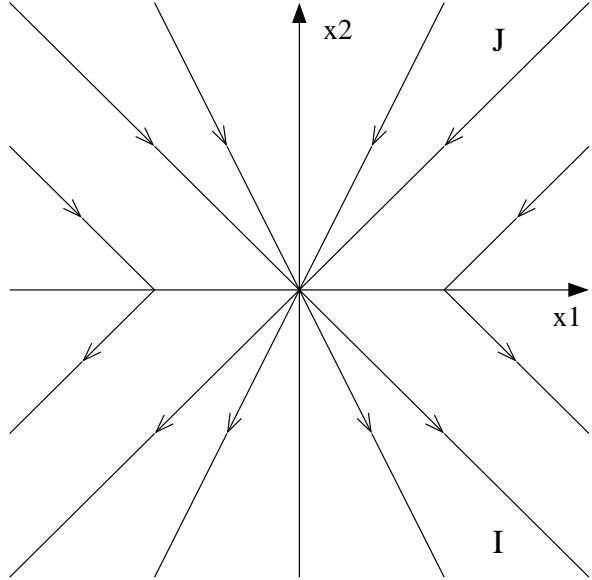}
\caption{ Flow lines of the example} \label{figex}
\end{figure}

\subsection{Form of the solutions}
First, we construct an a.e. flow $X$ solution of the associated ODE.
Since it would be symmetric in relation to the $x_2$-axis, we only
defined it for $x_1 \geq 0$. We also define it just for $t \geq 0$.

In the case when $0 \leq x_2 \leq x_1$, we set
\begin{gather*}
X(t,x)=(x_1 -t,x_2-t) \qquad \text{for} \quad t \leq x_2\\
X(t,x)=(x_1-2x_2 +t,x_2-t) \qquad \text{for} \quad t \geq x_2
\end{gather*}
while for $0 \leq -x_2 \leq x_1$, we set
$$X(t,x)=(x_1+t,x_2 -t)$$
In the case when $0 \leq x_1 < x_2$, we set 
\begin{gather*}
X(t,x)= \sqrt{1- \frac{2t}{(x_2)^2}}\,\,(x_1,x_2) \qquad
\text{for} \quad t \leq \frac{(x_2)^2}{2} \\
X(t,x)= \sqrt{\frac{2t}{(x_2)^2}-1}\,\,(x_1,-x_2) \qquad
\text{for} \quad t \geq \frac{(x_2)^2}{2} \\
\end{gather*}
And if $0 \leq x_1 < -x_2$,
$$ X(t,x)=\sqrt{1+\frac{2t}{(x_2)^2}}\,\,(x_1,x_2) $$

In the sequels, we denote $I=\{x \in \RR^2 | 0 < x_1 < -x_2\}$ and
$J=\{x \in \RR^2 | 0 < x_1 < x_2\}$. For an initial condition $u^o$,
some tedious computation easily shows that the solutions of the transport
equation (we use the fact that $u(t,X(t,x))$ is independent of $t$ as
long as X(t,x) does not reach the origin, and then we
use the change of variable $(t,x) \rightarrow (t,X(t,x))$ on all the
space, paying attention to what happens at the origin). They are of
the form

\begin{equation}
u(t,x) = \left\{ \begin{array}{ll}
u^o(X(-t,x)) & \text{if} \quad  x \notin I \quad \text{or}\quad x \in I
\quad \text{and} \quad t \leq \frac{(x_2)^2}{2} \\
\tilde{u}(X(-t,x)) & \text{if} \quad x \in I \quad \text{and} \quad t
\geq \frac{(x_2)^2}{2} \end{array} \right.
\end{equation}
where $\tilde{u}$ is any function defined on $J$ satisfying the
condition
\begin{equation} \label{masscons}
\forall x_2 > 0, \quad \int_{-x_2}^{x_2}\tilde{u}(x_1,x_2)\,dx_1 =
\int_{-x_2}^{x_2}u^o(x_1,x_2)\,dx_1
\end{equation}

Indeed, we use here the flow $X$ for simplicity but these solutions are
 not defined according to this flow when a trajectory pass through the
 origin. We will try to explain what happens at the origin. For $x_2 >
 0$, if the quantity $u$ represent a density of mass, all the mass on
 the segment $\{(x,x_2)|x \in (-x_2,x_2)\}$
 reaches the origin at the time $(x_2)^2/2$. After this time it
 continues to move in $I$ always on segments parallel to the
 $x_1$-axis, but it can be redistributed on them in any way
 provided the total mass on this segment in conserved. This is what
 means the condition (\ref{masscons}).

The renormalized solutions are always of this form, but the condition
(\ref{masscons}) should be replaced by 
\begin{equation} \label{masscons2}
\forall x_2 > 0, \forall \beta \in \CCC^1(\RR) \quad
\int_{-x_2}^{x_2}\beta(\tilde{u})(x_1,x_2)\,dx_1 =
\int_{-x_2}^{x_2}\beta(u^o)(x_1,x_2)\,dx_1 
\end{equation}
This condition (\ref{masscons2}) is equivalent to the fact that for
all $x_2 \in \RR$, we have a measure-preserving transformation $\Phi$
from $(-x_2,x_2)$ into itself such that $\tilde{u}=u^o \circ \Phi$. We
refer to \cite{Roy} for this point.

Moreover, we can also find all the flows solutions of the associated
ODE. Choosing a mesurable measure-preserving transformation $\Psi$
from $(-1,1)$ into itself, we defined a flow $X_{\Psi}$ by 
\begin{equation*}
X_{\Psi}(t,x)= \left\{ \begin{array}{ll} 
X(t,x) & \text{if} \quad x \notin J \quad \text{or} \quad t \leq
\frac{(x_2)^2}{2} \\
\Psi(x) X(t,x) & \text{if} \quad x \in J \quad \text{and} \quad t \geq
\frac{(x_2)^2}{2} \end{array} \right.
\end{equation*}

To see that this defined a a.e. flow, we use the property stated in
the definition of an a.e. flow and the fact that an a.e. flow is
measure preserving. Let us try to illustrate this definition. A
particle with an initial position $x^o$ in $J$ moving according to
$X_{\Psi}$ behaves as follow. It moves on the half-line $\{x|x_1/x_2 =
\lambda,\quad x_2> 0\}$ (with $\lambda=x_1^o/x_2^o$) until it reaches
the origin. Then it continues to move in $I$ but on the half-line
$\{x|x_1/x_2 = \Psi(\lambda),\quad x_2 < 0\}$. Indeed, $\Psi$ may be
seen as a mapping between the upper half-lines and the lower ones.

We can thus see that in this case, we have renormalized solutions that
are not defined according to an a.e. flow. Indeed, for a renormalized
solution, we can choose different mappings between the
upper half-lines and the lower ones for each $x_2$ (in other words $\tilde{u} =
u^o(\Psi_{x_2} (x_1/x_2)x_2,x_1)$ where $\Psi_{x_2}$ is measure-preserving
transformation from $(-1,1)$ into itself depending on $x_2$),
while for a solution defined according to an a.e. flow, this
correspondance will be independant of $x_2$ .

\subsection{Remark about the uniqueness of the solution}
First we remark that the flow $X$ is a specific one. It is the only one for
which the hamiltonian remains constant on all the trajectories.
Moreover, we observe that the solution $\overline{u}$ defined according
to $X$ is specific among all the others. This is the only one which
satisfies also the above family of equations (\ref{hamilt}), which says
that the hamiltonian remains constant on the trajectories.
\begin{equation} \label{hamilt}
 \forall f\in \CCC(\RR,\RR) \qquad \partial_t(f(H) u)+\diver(f(H)bu) = 0
\end{equation}

Indeed, we do the same computation that leads to (\ref{masscons}) with these
equations and we obtain the following conditions
\begin{equation}
\forall x_2 > 0,\, f \in \CCC(\RR,\RR), \quad 
\int_{-x_2}^{x_2}\tilde{u}(x_1,x_2) f(x_1) \,dx_1 =
\int_{-x_2}^{x_2}u^o(x_1,x_2) f(x_1) \,dx_1
\end{equation}
This implies that $\tilde{u}=u^o$ and then that $u=\overline{u}$.

Hence, adding the conditions (\ref{hamilt}) in the
definition of a solution, we are able to define it uniquely. Moreover,
if we try to solve this problem by approximation, choosing a sequence
of divergence free vector-field $b_m$ converging to $b$ in all
$\Lp_{loc}$, for $p<2$, (this implies that $H_m$ converge to $H$ up to
a constant in all $W^{1,p}_{loc}$, for $p<2$), we obtain a sequence of
solutions $u_m$, that satisfy all the equations (\ref{hamilt}) with
the initial condition $u^o$. This sequence $u_m$ is weakly compact in
$\Linf$. Extracting a converging subsequence, we see that the
equations (\ref{hamilt}) are always true at the limit and then the
sequence $u_m$ converge to $\overline{u}$. This solution is therefore
the only that we can construct by approximation.

\section{The case of a particule moving on a line}

We consider here a classical Hamiltonian
$$H(x,y)=y^2/2+ V(x) \quad \text{or} \quad b(x,y)=(y, -V'(x))$$
with $V$ a potential in $W^{1,p}_{loc}(\RR)$. Then $V'$ belongs to
$\Lp_{loc}(\RR)$. In this case, $b$ satisfies the $(P_x)$ assumption
in $\RR^2 \backslash \{y=0\}$. The set of criticals points $Z$ has
then zero-measure. We can apply the preceding results, if $H$ satisfies
$m(H(Z))=0$. When $V'$ is continuous, this is true because we can apply
the Sard Lemma. But this is false for a general $V' \in \Lun_{loc}$. If
$V'$ oscillates very quickly, $Z$ may even be the whole line. And then
$H(Z)$ is an interval because $H$ is continuous. However, we will show
that the result is always true in this case. Moreover, we can only
assume that $V'$ belongs to $\Lun_{loc}(\RR)$, since the other
composant of $b$ is in $\Linf_{loc}(\RR)$. 

The transport equation we are considering has the form
\begin{equation} \label{transham}
\frac{\partial u}{\partial t} + y \frac{\partial u}{\partial x}
-V'(x)\frac{\partial u}{\partial y} =0
\end{equation}

Here we can solve the differential equation $x'=y$, $y'=-a(x)$ directly
if we use the fact that the Hamiltonian is constant on a trajectory
and integrate the system. But this flow is not regular, and we do not
know how to work directly with it in order to solve the transport equation.  

\begin{thm}
For a flow $b(x,y)=(y, -V'(x))$ satisfying $V' \in \Lun_{loc}(\RR)$ and also $
[\max (1,-V(x))]^{-1/2}$ not integrable at $\pm \infty$, the
transport equation has an unique renormalized solution. 
\end{thm}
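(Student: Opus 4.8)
The plan is to concentrate all the difficulty on the line $\{y=0\}$ and to show that $u$ carries no singular flux across it. First I would observe that $b=(y,-V'(x))$ satisfies $(P_x)$ with $\xi=(\pm1,0)$ at every point with $y\neq0$, so that away from the line $b_1=y$ is \emph{locally bounded and bounded away from zero}. This boundedness is exactly what compensates for the fact that $b\notin\Lde_{loc}$ here (we only have $V'\in\Lunloc$, hence $H\in W^{1,1}_{loc}$ only): retracing Step 1 and Step 3 of Theorem 1 in a neighbourhood $U\subset\{y\neq0\}$, the only pairings that required $p\ge2$ now make sense by $\Lun$--$\Linf$ duality, since $b_1$ is bounded and $V'\in\Lun$. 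Thus the change of variables and the one--dimensional reduction still go through, and for every $\beta\in\CCC^1(\RR)$ the function $\beta(u)$ solves \eqref{transham} in the open set $\{y\neq0\}$. Note also that $O=\emptyset$ (the component $b_1=y$ never vanishes on an open set), so the whole obstruction to $\beta(u)$ being a global solution is a defect distribution supported on $\{y=0\}$.

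To isolate that defect I would run the cut-off argument of Corollaries 1 and 2 with a \emph{vertical} cut-off adapted to the line: take $\psi\in\Com(\RR)$ with $\psi\equiv1$ near $0$ and $\mathrm{supp}\,\psi\subset(-1,1)$, set $\Psi_\ep(y)=\psi(y/\ep)$, so that $(1-\Psi_\ep)\phi$ is supported in $\{y\neq0\}$ and is an admissible test function for $\beta(u)$. Testing the equation satisfied off the line against $(1-\Psi_\ep)\phi$ and letting $\ep\to0$, every term converges by dominated convergence except the commutator
\begin{equation*}
R_\ep=-\int_{[0,\infty)\times\RR^2}\beta(u)\,\phi\,b\cdot\nabla\Psi_\ep=-\int_{[0,\infty)\times\RR^2}\beta(u)\,\phi\,V'(x)\,\tfrac1\ep\psi'(y/\ep),
\end{equation*}
so the theorem reduces to proving $R_\ep\to0$. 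Since $\tfrac1\ep\psi'(y/\ep)$ concentrates at $y=0$ with zero mean, a short computation gives $R_\ep\to\int_{[0,\infty)\times\RR}V'(x)\,\phi(t,x,0)\,[\beta(u^+)-\beta(u^-)]\,dx\,dt$, where $u^{\pm}$ are the one-sided traces of $u$ on $\{y=0\}$. In other words the defect is the measure $\mu=V'(x)\,[\beta(u^+)-\beta(u^-)]\,\delta_{\{y=0\}}$, and it remains to show $\mu=0$.

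Wherever $V'(x)=0$ the weight kills the integrand, so the issue is only the set where $V'(x)\neq0$. There I would exploit the mechanical structure: since $\dot y=-V'(x)$ while $\dot x=y$ is small near the line, trajectories cross $\{y=0\}$ transversally at essentially frozen $x$ wherever $V'(x)\neq0$, so $u$ is transported continuously across and $\beta(u^+)=\beta(u^-)$ a.e. on $\{V'\neq0\}$. Analytically this is the statement that, using the equation in each half-plane, the dangerous term $V'\partial_y\beta(u)$ equals $\partial_t\beta(u)+y\,\partial_x\beta(u)$ off the line, i.e. the singular coefficient $V'$ is traded against $\partial_t\beta(u)$ and $y\,\partial_x\beta(u)$, the latter damped by the factor $y$ that vanishes on $\{y=0\}$; only the genuine flux survives the $\Psi_\ep$-localization, and it vanishes by the transversal-crossing argument. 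The remaining hypothesis, that $[\max(1,-V(x))]^{-1/2}$ is not integrable at $\pm\infty$, enters at the level of the one-dimensional reduction: it guarantees that $\int^{\pm\infty}dx/\sqrt{2(E-V(x))}=\infty$, i.e. no trajectory reaches infinity in finite time, so the reduced transport along each energy level (solved by the one-dimensional argument of Step 3 of Theorem 1) is globally defined in $t$ and the renormalized solution exists for all time.

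I expect the main obstacle to be precisely the vanishing of the flux $\mu$ on the part of $\{y=0\}$ where $V'$ oscillates in sign in every neighbourhood: there one cannot invoke $(P_x)$, and $H(Z)$ need not be null (this is why Corollary 2 fails), so the transversal-crossing/trace-matching statement must be made rigorous while controlling the singular coefficient $V'$ only through the $\Lunloc$ bound, the $y$-weight in $b_1$, and the equation itself.
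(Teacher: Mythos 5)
Your reduction of the problem to the line $\{y=0\}$ is sound as far as it goes: $(P_x)$ holds off the line, the $p\ge 2$ pairings of Theorem 1 are indeed replaced by $\Lun$--$\Linf$ duality because $b_1=y$ is locally bounded (the paper makes the same observation), and the whole difficulty is a defect distribution supported on $\{y=0\}$. But the step on which everything rests --- that $R_\ep$ converges to $\int V'(x)\,\phi\,[\beta(u^+)-\beta(u^-)]$ and that this limit vanishes --- is not proved, and you say so yourself. Concretely: (i) a bounded weak solution has, at best, a normal trace of the space--time flux $(\beta(u),\,b\,\beta(u))$ on $\{y=0\}$ in a distributional sense; factoring that trace as $V'(x)$ times a pointwise one-sided trace of $\beta(u)$ is not legitimate when $V'$ is only in $\Lunloc$ and changes sign on every interval, which is precisely the regime the theorem is designed for; (ii) the ``transversal crossing at frozen $x$'' argument presupposes that $u$ is transported along characteristics through the turning points, i.e.\ it assumes what is to be proved; note moreover that near a point with $V'(x_0)>0$ a characteristic does not cross $\{y=0\}$ but is \emph{reflected} (it returns into $\{x<x_0\}$ with $y<0$), so the correct matching is a reflection condition coupling the two half-planes along a level set of $H$, not continuity of $u$ across the line at fixed $x$. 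So the proposal correctly isolates the obstacle but does not remove it.

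The paper takes a route that avoids one-sided traces on $\{y=0\}$ altogether. It maps each half-plane to the energy domain $B=\{V(x)<E\}$ by $\Phi_\pm(x,y)=(x,y^2/2+V(x))$, develops a $W^{1,1}(B)$ trace theory so that test functions need not vanish on $\partial B$, and encodes the coupling of the two half-planes in the weak formulation itself: the pair $(v_+,v_-)$ solves the system \eqref{cov1}--\eqref{cov2}, where \eqref{cov1} is tested against functions that are allowed to be nonzero on $\partial B$. Disintegrating in $E$ (with a Sard-type lemma for $W^{1,1}$ potentials ensuring that for a.e.\ $E$ the components of $B_E$ are separated) and rectifying by $z=F(x)$ with $F'=1/\sqrt{2(E-V)}$, one obtains two free transport equations with the reflection conditions $w_+(t,b)=w_-(t,b)$ and $w_+(t,a)=w_-(t,a)$ extracted rigorously from \eqref{ncov1}; the solution is then written explicitly by repeated reflection, which gives uniqueness and shows it is renormalized. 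The non-integrability hypothesis on $V$ enters exactly where you place it: it guarantees that an unbounded component of $B_E$ rectifies to an unbounded interval, so no boundary condition is missing. To close your commutator strategy you would in effect have to carry out this boundary and reflection analysis anyway, at which point you have reproduced the paper's argument.
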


\textbf {Remarks}
\begin{itemize}
\item[i.]  The condition of integrability on $V$ is there to insure
  that a point  does not reach $\pm \infty$ in a finite time. It could
  be replaced by a stronger condition like $V(x) \geq -C(1+x^2)$.
\item[ii.] This result can be adapted to the case of two particles
  moving on a line according to a interaction potential in
  $W^{1,1}_{loc}$. In order to do so we just have to use a change
  of variable which follows the classical way of reducing this
  two-body problem to a one-body problem.
\end{itemize}

\begin{proof}
We can use our previous theorem in the neighboorhood
of a point with $y \neq 0$. This will give us ``the result'' on two
half-planes, but we need to ``glue'' together the information available
on this two half-planes. Then we need to work differently, and we shall
follow the same sketch of proof as in our first theorem.

\textbf{Step 1.} A change of variables. \\
First we define 
$$\Phi_+ (x,y)= (x,y^2/2 + V(x)) \qquad \text{from} \quad \RR\times
(0,\infty) \quad \text{to} \quad B $$
$$\Phi_- (x,y)= (x,y^2/2 + V(x)) \qquad \text{from} \quad \RR\times
(-\infty,0) \quad \text{to} \quad B $$
where $B = \{(x,E) \in \RR^2| V(x) < E \}$.
Then $\Phi_+$ and $\Psi_-$ are continuous and belong to $W^{1,1}_{loc}$ with
$$ D\Phi_{\pm} = 
\left(\begin{array}{cc}
1 & 0 \\
V'(x) & y \\
\end{array} \right)$$
and the same for $\Phi_-$.

 These transformations are one-to-one and onto
and 
$$\Phi_{\pm}^{-1}(x,E)= (x, \pm \sqrt{2(E-V(x)})$$.
$$ D\Phi_+^{-1} = 
\left(\begin{array}{cc}
1 & 0 \\
- \frac{V'(x)}{\sqrt{2(E-V(x))}} & \frac{1}{\sqrt{2(E-V(x))}} \\
\end{array}\right) ,$$
with a similar formula for $\Phi_-^{-1}$.

The following change of variables is true
$$\int_{y>0} f(x,y) \,dxdy = \int_B \frac{f\circ
  \Phi_+^{-1}(x,E)}{\sqrt{2(E-V(x))}} \,dxdE $$
for $f$ in $\Linf$ and even in $\Lun$.

Before going further, we state some properties about the set
$W^{1,1}(B)$. We define $\CCC^{\infty}(\overline{B})$
(resp.$\Com(\overline{B})$)  the space of
restriction to B of $\CCC^{\infty}$-functions on $\RR^2$ (resp. such
functions with compact support). We recall that $\partial B=
\{(x,V(x))| x \in \RR\}$.

\begin{figure}[ht]
\centering
\includegraphics{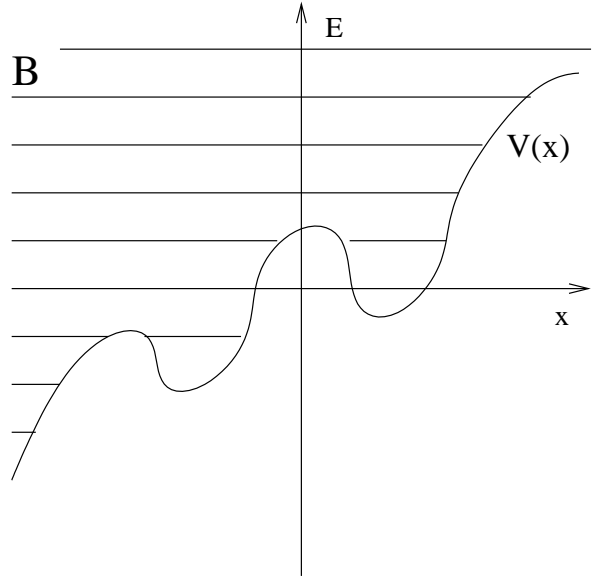}
\caption{ The domain $B$}
\end{figure}

\begin{prop}
\begin{itemize}
\item[]
\item[i.]$\Com(\overline{B})$ is dense in $W^{1,1}(B)$.
\item[ii.]The trace of a function in $W^{1,1}(B)$ has a sense in $\Lun$. More
precisely, there exist a continuous application $Tr: W^{1,1}(B) \rightarrow
\Lun(\RR)$ such that $Tr(\phi)=\phi(\cdot,V(\cdot))$ if $\phi \in
\Com(\overline{B})$. 
\item[iii.]$\overline{\Com(B)}= Ker(Tr)$, the kernel of the
  trace, also denoted by $W^{1,1}_o(B)$.
\item[iv.] The same results are true for $\RR \times B$ and $[0,\infty)
  \times B$ as well as locally.
\end{itemize}
\end{prop}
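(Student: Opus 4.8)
The plan is to exploit the fact that $B$ is the strict epigraph $\{E > V(x)\}$, so that translation in the vertical ($E$) direction pushes functions off the boundary $\partial B = \{(x,V(x))\}$ while keeping us inside a region of the same type. The structural observation driving everything is that, although $\partial B$ is only a $W^{1,1}$ graph and therefore \emph{not} Lipschitz, the function $V$ is nevertheless continuous, since a $W^{1,1}_{loc}$ function of one real variable embeds into $C$. This continuity is precisely what replaces the Lipschitz regularity that the classical trace theory would require, and it is what makes the translation-plus-mollification scheme below legitimate.

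For (i), given $u \in W^{1,1}(B)$ I would set $u_\tau(x,E) = u(x,E+\tau)$ for $\tau>0$. This is defined on the strictly larger open set $B_\tau = \{E > V(x)-\tau\}$, which contains $\overline{B}$, and $u_\tau \to u$ in $W^{1,1}(B)$ as $\tau\to 0$ by continuity of translation in $\Lun$. Since $u_\tau$ now lives on an open neighbourhood of $\overline{B}$, a mollification $u_\tau * \rho_\epsilon$ is smooth up to $\overline{B}$ for $\epsilon$ small; here continuity of $V$ guarantees that the $\epsilon$-neighbourhood of $\overline{B}$ stays inside $B_\tau$. Cutting off at infinity with a sequence $\chi_R$ and extracting diagonally yields functions in $\Com(\overline{B})$ converging to $u$.

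For (ii) the trace comes from the one-dimensional fundamental theorem of calculus in $E$. For $\phi \in \Com(\overline{B})$ and fixed $x$ one has $\phi(x,V(x)) = \phi(x,E) - \int_{V(x)}^{E}\partial_E\phi(x,s)\,ds$; averaging this identity over $E \in (V(x),V(x)+1)$ and then integrating in $x$ gives the estimate $\|Tr(\phi)\|_{\Lun(\RR)} \leq \|\phi\|_{W^{1,1}(B)}$. Thus $Tr$ is continuous on the class $\Com(\overline{B})$ and, by the density established in (i), extends to a continuous operator on $W^{1,1}(B)$ agreeing with $\phi\mapsto\phi(\cdot,V(\cdot))$ on that dense class.

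For (iii), the inclusion $\overline{\Com(B)} \subset Ker(Tr)$ is immediate, since interior functions have vanishing trace and $Tr$ is continuous. The reverse inclusion is the delicate point: taking $u \in Ker(Tr)$, I would first reduce to compact support by cutting off, then set $w_\tau(x,E) = u(x,E-\tau)$ for $E > V(x)+\tau$, extended by $0$ on the strip $\{V(x) < E \leq V(x)+\tau\}$. Both one-sided traces at the interface $\{E = V(x)+\tau\}$ equal $Tr(u)=0$, so the extension by zero produces no singular part and $w_\tau \in W^{1,1}(B)$ with $w_\tau \to u$; since $w_\tau$ vanishes on a strip adjacent to $\partial B$ and $V$ is continuous, $w_\tau * \rho_\epsilon$ lands in $\Com(B)$ for $\epsilon$ small, and a diagonal argument concludes. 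I expect this zero-trace approximation to be the main obstacle, exactly because the extension by zero must be justified with no Lipschitz control on $\partial B$; it is the matching of the two vanishing traces that rescues the argument. Finally, (iv) follows by carrying the extra variable $t \in \RR$ or $t \in [0,\infty)$ along as an inert parameter, so that the domain becomes a cylinder over $B$, the translation and mollification act in $(x,E)$ and smoothly in $t$, and the local statements are recovered by the usual cutoff procedure.
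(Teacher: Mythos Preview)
Your approach is essentially the same as the paper's: translate vertically in $E$ to gain room, mollify, and for (iii) use the vanishing trace to justify extension by zero before translating the support back inside $B$. The only cosmetic differences are that the paper packages the translation--mollification of (i) as a single formula $f_\epsilon = \rho_\epsilon * \bigl(f(\cdot + 2\epsilon e)\,\chi_{\{E>V(x)-2\epsilon\}}\bigr)$, and for (ii) it integrates $\partial_E\phi$ all the way to $+\infty$ (using compact support) rather than averaging over a unit strip, obtaining the slightly sharper bound $\|Tr(\phi)\|_{\Lun}\le\|\nabla\phi\|_{\Lun(B)}$.
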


\begin{proof}[Proof of the proposition]
We refer to theorem 3.18 in \cite{Ada} for a complete proof. But we may
adapt the proof of this theorem to this simpler case. For a $f \in
W^{1,1}(B)$, we define for $\epsilon >0$ 
$$f_{\epsilon}= \rho_{\epsilon} * (f(\cdot + 2\epsilon e)\chi_{\{E >
  V(x) - 2\epsilon\}})$$
where $e=(0,1)$ and $\rho_{\epsilon}(x,E)= \rho(x
/\epsilon,E/\epsilon)$ with $\rho \in\Com(\RR^2)$ satisfying $\int
\rho =1$. Then, the functions $f_\epsilon$ belong to
$\Com(\overline{B})$ and converges to $f$ in $W^{1,1}(B)$ as $\epsilon
\rightarrow 0$.

For the second point, we choose $f \in \Com(\overline{B})$. Then
$$ f(x,V(x))= -\int_{V(x)}^{\infty} \frac{\partial f}{\partial
  E}(x,E)\,dE $$
taking the absolute value and integrating in $x$ leads to
$$ \int_{\RR} |f(\cdot,V(\cdot))| \leq \|\nabla f\|_{\Lun(B)} $$

Then, the trace is a contraction from $\Com(\overline{B})$ with the
$W^{1,1}$-norm into $\Lun(\RR)$, and since $\Com(\overline{B})$ is dense
in $W^{1,1}(B)$, we may extend this application to
$W^{1,1}(B)$.

For the third point, we take $f \in Ker(Tr)$ and extend it by zero
outside $B$. We obtain a $\tilde{f}$ in $W^{1,1}(\RR^2)$. Then if
we translate $\tilde{f}$ in the direction of $e=(1,0)$ and smooth it
by convolution, we can construct $\CCC^{\infty}$-approximations of $f$
with support in $B$.  
\end{proof}

\textbf{Step 2.} Equivalence with a simpler transport equation.

Now, let $u$ be a solution of the transport equation. We may write
\begin{equation} \label{ham}
 \int_{[0,\infty) \times \RR^2} u (\partial_t \phi + y \partial_x \phi - V'(x)
\partial_y \phi) \,dxdy = -\int_{\RR^2} u^o \phi^o 
\end{equation}\
for all $\phi \in W^{1,1}([0,\infty) \times \RR^2)$ with compact
support (in the sense of  distributions) satisfying moreover
$\partial_y \phi \in \Linf([0,\infty) \times \RR^2)$.

Let $\Psi_+$ and $\Psi_-$ be in $\Com([0,\infty) \times
\overline{B})$, and $\Psi_+$ and $\Psi_-$ satisfy the compatibility
condition ${\Psi_+}_{|[0,\infty) \times \partial B}
={\Psi_-}_{|[0,\infty) \times \partial B}$. We define $\phi$ from
$[0,\infty) \times \RR^2$ to $\RR$ with
$$ \phi(t,x,y)= \left\{ \begin{array}{ll} \Psi_+ (t, \Phi_+(x,y)) & \text{if}
  \quad y>0 \\ 
\Psi_- (t, \Phi_-(x,y)) & \text{if}  \quad y<0  \end{array} \right. $$
Then, $\phi$ belongs to $W^{1,1}([0,\infty) \times \RR^2)$, has a compact
support, and $\phi$, $\partial_t \phi$, $\partial_y \phi$ are in
$\Linf$.
Moreover,
\begin{gather*}
 \partial_x \phi = (\partial_x \Psi_+) \circ \Phi_+ + E(x)
(\partial_y \Psi_+) \circ \Phi_+ \text{\ \ \ for } y>0  \\
\partial_y \phi = y (\partial_y \Psi_+) \circ \Phi_+    \\
\partial_t \phi = (\partial_t \Psi_+) \circ \Phi_+  
\end{gather*}
Then we have \hspace{1cm} $\partial_t \phi + y \partial_x \phi -
E(x)\partial_y \phi =(\partial_t \Psi_+) \circ \Phi_+ + y  (\partial_x
\Psi_+) \circ \Phi_+$ for all $y>0$.

We write $v_{\pm}= u \circ \Phi_{\pm}^{-1}$, defined on $[0,\infty) \times
B$. Then, (\ref{ham}) may be written as follows.
\begin{multline}
\int_{[0,\infty) \times \{y>0\}} [v_+ (\partial_t \Psi_+ +
\sqrt{2(E-V(x))}\partial_x \Psi_+)] \circ \Phi_+ \\
 + \int_{[0,\infty)   \times \{y<0\}} [v_- (\partial_t \Psi_- -
 \sqrt{2(E-V(x))}\partial_x\Psi_-)] \circ \Phi_- \\ 
= \int_{y>0} (v_+^0 \Psi_+^0) \circ \Phi_+ +\int_{y<0} (v_-^0
\Psi_-^0) \circ \Phi_-   
\end{multline}
We can apply the change of variables, and we obtain
\begin{multline} \label{hamch}
\int_{[0,\infty) \times B} v_+ ( \frac{\partial_t
  \Psi_+}{\sqrt{2(E-V(x)})} + \partial_x \Psi_+) + \\ 
  \int_{[0,\infty)
  \times B} v_- ( \frac{\partial_t \Psi_-}{\sqrt{2(E-V(x))}} -
  \partial_x \Psi_-)
 = \int_B \frac{v_+^0 \Psi_+^0 + v_-^0
  \Psi_-^0}{\sqrt{2(E-V(x))}} 
\end{multline}
It is difficult to work with $\Phi_+$ and $\Phi_-$ because of the
compatibility condition. But we may make the particular choice
$\Phi_+=\Phi_-$ (below we will omit the indices $\pm$). Then
(\ref{hamch}) becomes 
\begin{equation} \label{hamcheven}
\int_{[0,\infty) \times B} (v_+ + v_-)\frac{\partial_t
  \Psi}{\sqrt{2(E-V(x))}} + (v_+ -v_-) \partial_x \Psi = \int_B
  \frac{(v_+^0 + v_-^0}{\sqrt{2(E-V(x))}} \Psi^0  
\end{equation}
Now, we choose $\Psi_+=-\Psi_-$ and $\Psi_{|[0,\infty) \times
  \partial B}=0$ (we omit the indices $\pm$). In this case,
  (\ref{hamch}) becomes  
\begin{equation} \label{hamchodd}
\int_{[0,\infty) \times B} (v_+ - v_-) \frac{\partial_t
  \Psi}{\sqrt{2(E-V(x))}} + (v_+ + v_-) \partial_x \Psi = \int_B
  \frac{(v_+^0 - v_-^0)}{\sqrt{2(E-V(x))}} \Psi^0  
\end{equation}

Then, (\ref{ham}) implies (\ref{hamcheven}) for all $\Psi$ in
$\Com([0,\infty) \times \overline{B})$, and (\ref{hamchodd}) for all
$\Psi \in \Com([0,\infty) \times \overline{B})$ with $\Psi_{|[0,\infty)
  \times \partial B}=0$, or equivalently for all $\Psi \in
\Com([0,\infty) \times B)$ since $\Com([0,\infty) \times B)$ is dense
in $W^{1,1}_o([0,\infty) \times B)$. And conversly, these two statements are equivalent with
(\ref{hamch}) for all $\Psi_+$ and $\Psi_-$ in $\Com([0,\infty) \times
\overline{B})$ having the same trace on the boundary.
 
Thus, we have to solve
\begin{equation} \label{cov1}
\partial_t( \frac{v_+ + v_-}{\sqrt{2(E-V(x))}}) + \partial_x(v_+ -v_-)
  =0 \qquad \text{on} \quad \DDD'([0,\infty) \times \overline{B})
\end{equation}
\begin{equation} \label{cov2}
\partial_t(\frac{v_+ - v_-}{\sqrt{2(E-V(x))}}) + \partial_x(v_+ +v_-)
  =0 \qquad \text{on} \quad \DDD'([0,\infty) \times B)
\end{equation}
with the convenient initial conditions. In (\ref{cov1}),
$\DDD'([0,\infty) \times \overline{B})$ means that we allow test functions
in $\CCC^{\infty}([0,\infty) \times \overline{B})$.

We can do the same arguments backwards. Therefore, solving
(\ref{cov1})-(\ref{cov2}) is equivalent to solve (\ref{transham}) 

\textbf{Step 3.} Reduction to one dimension.\\
 These two equations do not contain any derivative in $E$. As in the
 proof of the first result, we want to  reduce them to equations in
 one dimension of space. For the second  equation (\ref{cov2}), we can
 make the same argument and we obtain that this equation holds on $B_E$, for
 almost all $E$ in $\RR$. (with $B_E=\{x \in  \RR | (x,E) \in B\}$).

For the first equation (\ref{cov1}) we can still apply the
argument. We shall be more precise since it is a little bit more
involved. We choose a test function $\phi$ of the form $\phi_1 \phi_2$
with $\phi_1$ depending only on (t,x) and $\phi_2$ depending on
$E$. We obtain 
\begin{multline} \label{product}
\int_{[0,\infty) \times B} \left({(v_+ + v_-)\frac{\partial_t
\phi_1}{\sqrt{2(E-V(x))}} +  (v_+ -v_-) \partial_x \phi_1}\right)
\phi_2  \\
= \int_B   \frac{(v_+^0 + v_-^0)}{\sqrt{2(E-V(x))}}
\phi_1^o \phi_2
\end{multline}

Since the linear combinaisons of functions of the form $\phi_1 \phi_2$
are dense in $\Com([0,\infty))$ with the $W^{1,1}$-norm,
(\ref{product}) for all $\Com$ $\phi_1$ and $\phi_2$ is equivalent
with (\ref{hamcheven}) for all $\Com$ $\Psi$. Moreover, since
 $W^{1,1}([0,\infty) \times \RR)$ is separable, it
is sufficient (and necessary) to write (\ref{product}) for $\phi_1$
choosen among a countable subset $F_1$ of $\Com$-functions. 

Now, using Fubini's theorem (\ref{product}) may be rewritten 

\begin{multline} \label{fubini}
\int_{\RR} \left({ \int_{[0,\infty) \times B_E} (v_+ + v_-)\frac{\partial_t
  \phi_1}{\sqrt{2(E-V(x))}} + (v_+ -v_-) \partial_x \phi_1 \,dtdx}\right)
\phi_2 \, dE \\
= \int_{\RR} \left( { \int_{B_E} \frac{(v_+^0 +
      v_-^0)}{\sqrt{2(E-V(x))}} \phi_1^o \,dx} \right)\phi_2 \,dE
\end{multline}
for a fixed $\phi_1$. Since it is satisfied for all $\Com$-$\phi_2$,
we obtain that

\begin{equation}
 \int_{[0,\infty) \times B_E} (v_+ + v_-)\frac{\partial_t
  \phi}{\sqrt{2(E-V(x))}} + (v_+ -v_-) \partial_x \phi = \int_{B_E}
  \frac{(v_+^0 + v_-^0)}{\sqrt{2(E-V(x))}} \phi^o
\end{equation}
for all $E \in \RR \backslash N$ where $N$ is a zero-measure set
depending on $\phi_1$. Now, if we write this equation for all
$\phi_1 \in F_1$, we obtain that (\ref{cov1}) is satisfied, but this time in
$[0,\infty) \times B_E$ for allmost all $E \in \RR$. And we can do the
argument backwards to show that this is equivalent to the initial
problem. Finally, we just have to solve (\ref{cov1})-(\ref{cov2}) on
$B_E$ instead of $B$. 

\textbf{Step 4.} Solution of the one dimensionnal problem.

$B_E$ is a countable union of disjoint open intervals. We denote $B_E=
\cup_n (a_n,b_n)$, where $a_n$, $b_n$ are disjoints reals. But,
since we shall also work on $\overline{B_E}$ we
want that these open intervals are not to ``close'' to each other. For
instance, if there exist $n, \; m$ such that $b_n=a_m$ and if
$1/\sqrt{2(E-V(x))}$ is integrable on a neighboorhood of $b_n$, a
particle reaching $b_n$ from the left may continue to go further right or may
change direction and go backwards. This will give rise to distinct
solutions of the transport equation. But, we shall show that for almost
all $E$, we have some ``free zone'' around each $(a_n,b_n)$. More
precisely, for almost all $E$ there exists an $\epsilon_n > 0$
such that $B_E \cap (a_n - \epsilon_n, b_n + \epsilon_n) = (a_n,
b_n)$. If we admit this point, we see that we just have to solve
(\ref{cov1})-(\ref{cov2}) on an interval of the type $(a',b')$, where $a'$
belongs to $[-\infty,+\infty)$ and $b'$ to $(-\infty,+\infty]$. Before
going further, we prove the

\begin{lemma}
For almost all $E$, if we write $B_E=\cup_n (a_n,b_n)$ then, for each
$n$, there exists some $\epsilon_n > 0$ such that $B_E \cap
(a_n - \epsilon_n, b_n + \epsilon_n) = (a_n,b_n)$
\end{lemma}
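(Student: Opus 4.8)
The plan is to show that the set of ``bad'' levels $E$ for which the conclusion fails has measure zero, by splitting the failure into two disjoint mechanisms and controlling each separately. First I would record the elementary geometry: since $V$ is continuous, $B_E=\{x\in\RR:V(x)<E\}$ is open, so $B_E=\cup_n(a_n,b_n)$ with disjoint maximal intervals, and every finite endpoint $c\in\{a_n,b_n\}$ satisfies $V(c)=E$ (by continuity and maximality). The separation at a finite right endpoint $b_n$ fails exactly when points of $B_E$, i.e.\ points with $V<E$, accumulate at $b_n$ from the right, and symmetrically at $a_n$ from the left. So it suffices to exclude, for a.e.\ $E$, the existence of any finite endpoint that is a one-sided limit of $\{V<E\}$ from outside its own component.

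Next I would isolate the dichotomy driving the proof. If the separation fails at $b_n$, then either (a) there is $\delta>0$ with $V<E$ on all of $(b_n,b_n+\delta)$, in which case $b_n=a_m$ is the common endpoint of two components and, since $V<E$ on a punctured neighbourhood of $b_n$ while $V(b_n)=E$, the point $b_n$ is a strict local maximum of $V$ at the value $E$; or (b) points with $V\ge E$ and points with $V<E$ both accumulate at $b_n$ from the right, so by the intermediate value theorem $V^{-1}(\{E\})$ accumulates at $b_n$. Thus every failure is caused either by a strict local maximum at level $E$, or by a finite accumulation point of the level set $V^{-1}(\{E\})$.

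To kill mechanism (b) I would use the Banach indicatrix (coarea) theorem rather than Sard's lemma, which the text already notes is unavailable. Since $V\in W^{1,1}_{loc}$ is locally absolutely continuous, for each $R\in\NN$ the indicatrix $N_R(E)=\#\,(V^{-1}(\{E\})\cap[-R,R])$ satisfies $\int_\RR N_R(E)\,dE=\int_{-R}^{R}|V'|<\infty$, hence $N_R(E)<\infty$ for a.e.\ $E$; intersecting the resulting full-measure sets over $R\in\NN$ shows that for a.e.\ $E$ the set $V^{-1}(\{E\})$ is locally finite, and in particular has no finite accumulation point, ruling out (b). To kill mechanism (a) I would observe that strict local maxima form a countable set of points: mapping each strict local maximum $c$ to a pair of rationals $p<c<q$ with $\sup_{[p,q]}V=V(c)$ attained only at $c$ is injective, so the set of strict-local-maximum values is countable and hence null.

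Combining the two, the bad set of levels is contained in a null set (from mechanism (b)) together with a countable set (from mechanism (a)), hence is null; for every $E$ outside it the separation $\epsilon_n$ exists at each finite endpoint, while endpoints at $\pm\infty$ impose no condition, which is exactly the claim. The main obstacle is mechanism (a): it survives the indicatrix argument because a strict local maximum leaves $V^{-1}(\{E\})$ locally finite, so the coarea bound says nothing about it, and it must be excised by the separate countability argument for strict-local-maximum values.
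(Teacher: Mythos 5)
Your proof is correct, but it takes a genuinely different route from the paper's. The paper makes no case distinction: it first proves a Sard-type lemma, $\lambda(V(Z))=0$ for $Z=\{V'=0\}$, by covering $Z$ with open sets $O_n$ and estimating $\lambda(V(O_n))\le\int_{O_n}|V'|=\int_{O_n\setminus Z}|V'|\to 0$; it then discards, via the Luzin (N) property of absolutely continuous functions, the image under $V$ of the null set $Z_1$ of non-Lebesgue points of $V'$. For $E\notin V(Z\cup Z_1)$ every finite endpoint $c$ of a component of $B_E$ satisfies $V(c)=E$, hence is a Lebesgue point of $V'$ with $V'(c)\neq 0$; the geometry forces the sign of $V'(c)$, and the Lebesgue-point estimate then gives $V>E$ on a one-sided neighbourhood outside the component, which rules out both of your mechanisms in one stroke (in particular, a strict local maximum at level $E$ cannot occur at such an endpoint). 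You replace this pointwise analysis of $V'$ by the Banach indicatrix theorem together with the countability of strict local maximum values of a continuous function, and your dichotomy is forced precisely because, as you note, the indicatrix is blind to strict local maxima. The trade-off: the paper's argument is self-contained given $V\in W^{1,1}_{loc}$ (it needs only $\lambda(V(A))\le\int_A|V'|$ and the Lebesgue differentiation theorem) and yields quantitative transversality of $V-E$ at each endpoint; yours invokes the heavier classical theorem of Banach but avoids Lebesgue points entirely and applies verbatim to any continuous function of locally bounded variation, a setting in which the Luzin (N) step of the paper's proof would break down.
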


\begin{proof}[Proof of the lemma]
First we recall that since $V$ belongs to $W^{1,1}_{loc}$, the image by
$V$ of a zero-measure set is a zero-measure set.
Then, we state a result similar to the Sard's lemma for V. Let  $Z$ be the
set were $V'$ vanishes. We claim that $V(Z)$ has zero-measure. Of
course , $Z$ is defined up to a zero-measure set, but this is
irrelevant for our claim in view of the fact recalled above.
In order to prove our claim, we choose a sequence of open sets $O_n$
such that $Z \subset  O_n$ and $\lambda(O_n \backslash Z)$ goes to $0$
as $n$ goes to $\infty$. Here and below $\lambda$ denotes the Lesbegue measure
on $\RR$ or $\RR^2$. We may write $O_n = \cup_m I_{n,m}$ where the
$I_{n,m}$ are disjoint intervals of $\RR$. Then,
\begin{eqnarray*}
\lambda(V(O_n)) & = & \lambda(V(\cup_m I_{n,m})) \leq \sum_m
\lambda(V(I_{n,m}))  \\
 & \leq  & \sum_m \int_{I_{n,m}} |V'| = \int_{O_n} |V'|  \\
 & \leq &  \int_{O_n \backslash Z} |V'|
\end{eqnarray*}
and the last quantity goes to $0$ as $n \rightarrow \infty$ since
$\lambda(O_n \backslash Z)$ goes to $0$ as $n \rightarrow \infty$ and
our claim is shown.

Next, we denote by $Z_1$ the set such that $Z_1^c$ is the set of
Lebesgue points of $V'$ (i.e. the set of points such that
$1/(2\epsilon) \int_{x-\epsilon}^{x+\epsilon} |V'(y)-V'(x)| \,dy$ goes
to zero as $\epsilon \rightarrow 0$).  Then,
$\lambda(Z_1)=0$. According to what we proved above, we know that
$\lambda(V(Z \cup Z_1))=0$. Now, if we choose $E \in V(Z \cup
Z_1)^c$,and write $B_E=\cup_n (a_n,b_n)$ as above, we know that $a_n$
and $b_n$ are Lebesgue's point of $V'$ with $V'(a_n)\neq 0$ and
$V'(b_n)\neq 0$. Then necessarily, $V'(b_n) > 0$ and $V$ is strictly
increasing in a neighboorhood of $b_n$ because it is a Lebesgue's
point. Since we may make the same argument near $a_n$, we have then
shown the existence of $\epsilon_n$ as stated in the lemma.  
\end{proof}

To solve (\ref{cov1})-(\ref{cov2}) on $(a',b')$ we use the change of
variable $x \mapsto z= F(x)$  where $F$ is a primitive of
$1/\sqrt{2(E-V(x))}$ from $(a',b')$ to $(a,b)$. We can because this
quantity is locally integrable on almost all lines (this result is
easily seen using Fubini's theorem). Then, we obtain the two following
equations   
\begin{equation} \label{ncov1}
\partial_t(w_+ + w_-) + \partial_z(w_+ -w_-) =0 \qquad  \text{on} \quad
[0,\infty) \times [a,b] 
\end{equation}
\begin{equation} \label{ncov2}
\partial_t(w_+ - w_-) + \partial_z(w_+ +w_-)
  =0 \qquad  \text{on} \quad [0,\infty) \times (a,b)
\end{equation}
with appropriate initial conditions. And as before, in (\ref{ncov1})
we use test functions in $\Com([0,\infty) \times [a,b])$ (in others
words the tests functions do not necessarily vanish on $\{z=a\}$ and
$\{z=b\}$ when $a$ and $b$ are finite).

Here, if $a'=-\infty$ or $b'=+\infty$ we need the assumption of
non-integrability on $V$. If it is not verified, $a$ (or $b$) will
be finite, and we cannot use test functions which do
not  vanish on $\{z=a\}$ (or $\{z=b\}$) in (\ref{cov1}). And we shall not have
the uniqueness of solutions of the equivalent problem (as will become
clearerr below).

 Adding and substracting the two equations in $\DDD'([0,\infty) \times
(a,b))$ yields
$$ \partial_t w_+ + \partial_z w_+ = 0 \qquad \text{in} \quad
\DDD'([0,\infty) \times (a,b)) $$
$$ \partial_t w_- - \partial_z w_- = 0 \qquad \text{in} \quad
\DDD'([0,\infty) \times (a,b)) $$

Hence, the solutions are of the form $w_+(t,z)=\Phi_+(z-t)$ and
$w_-(t,z)=\Phi_-(z+t)$ with $\Phi_+$ and $\Phi_-$ belonging to
$\Linf(\RR)$. but we have not used yet the fact that (\ref{ncov1}) is
true on [a,b]. This tells us formally that $w_+(t,a)=w_-(t,a)$ when $a
\neq -\infty$ and $w_+(t,b)=w_-(t,b)$ when $b \neq +\infty$. This can
be justified. Indeed, let us assume that $b \neq +\infty$ and let we
choose some $\phi \in \Com((0,\infty))$, an $\epsilon \in (0, b-a)$ and
$\chi_{\epsilon} \in \CCC^{\infty}(\RR)$ increasing such that
$\chi_{\epsilon}(z)=0$ for $z < b-\epsilon$ and some $\chi_{\epsilon}(z)=1$
for $z> b$ . We use $\phi \chi_{\epsilon}$ as a test function in
(\ref{ncov1}). We then obtain
\begin{multline*}
\int_{[0,\infty) \times (b -\epsilon , b)} (\Phi_+(z-t) +
\Phi_-(z+t))\partial_t \phi(t) \chi_{\epsilon}(z)\,dtdz  \\
+ \int_{[0,\infty) \times (b -\epsilon , b)} (\Phi_+(z-t)
-\Phi_-(z+t)) \phi(t)\partial_z \chi_{\epsilon}(z)\,dtdz = 0
\end{multline*}
When $\epsilon \rightarrow 0$, the first integral goes to $0$. The
second integral goes to $\int_{[0,\infty)} (\Phi_+(b-t) -
\Phi_-(b+t))\phi(t) \,dt$. Since it holds for all $\phi \in
\Com((0,\infty))$, we obtain that $\Phi_+(b-t) =\Phi_-(b+t)$. We can
prove similary that $\Phi_+(a-t) =\Phi_-(t+a)$ if $a \neq -\infty$. 

Now, we shall assume that $a$ and $b$ are both finite (the other cases
are similar and simpler) and we define $l=b-a$. Without using the
boundary conditions, the initial conditions
on $w_+$ and $w_-$ impose the value of $\Phi_+$ and $\Phi_-$ on the
interval $(a,b)$. Of course, it should be understood in sense of
functions defined almost everywhere, but here it does not raise any
difficulty and we will omit to specify it afterwards. Using the
boundary condition $\Phi_+(b-t) =\Phi_-(b+t)$, we see that $\Phi_+$ and
$\Phi_-$ are determined in $(b,b+l)$.  And the condition $\Phi_+(a-t)
=\Phi_-(t+a)$ determines $\Phi_+$ and $\Phi_-$ in $(a-l,a)$. If we
continue to use this symmetry argument further, we see that $\Phi_+$
and $\Phi_-$ are uniquely determined in $\RR$, provided we know them
in $(a,b)$ (we remark here that it is not the case if one of the
boundary counditions is missing, as it is the case when $a=-\infty$ or
$b=+\infty$ and  the assumption of non-integrability on $V$ is not
satisfied).Then, for every intial condition (on $w_+$ and $w_-$) in
$\Linf$, there exists a unique solution to the system
(\ref{ncov1})-(\ref{ncov2}). And in view of the form of those
solutions, we see that they are renormalized ones. This concludes the
proof.

\begin{figure}[h]
\centering
\includegraphics{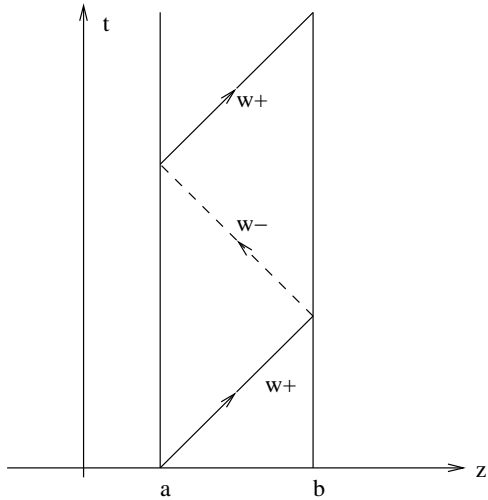}
\caption{ behaviour of $w_+$ and $w_-$} \label{bound}
\end{figure}

\end{proof}

\def\cprime{$'$} \def\cprime{$'$}

\end{document}